\newcommand {\C}  {{\mathbb C}}
\newcommand {\CC}  {{\mathbb C}}
\newcommand {\NN}  {{\mathbb N}}
\newcommand {\N}  {{\mathbb N}}
\newcommand {\HH}  {{\mathcal H}}
\newcommand {\g}   {\mathfrak g}
\DeclareMathOperator{\Gal}{Gal}
\DeclareMathOperator{\Mon}{Mon}
\DeclareMathOperator{\Aut}{Aut}
\newcommand{\abs}[1]{\lvert#1\rvert}
\newtheorem {Def}{Definition}
\newtheorem {Th}{Theorem}
\newtheorem {Pro}{Proposition}
\newtheorem {Le}{Lemma}
\begin{document}
\title[Decomposable polynomials in recurrence sequences]{Decomposable polynomials in second order linear recurrence sequences}
\author{Clemens Fuchs}
\address[Clemens Fuchs]{University of Salzburg, Hellbrunnerstr. 34/I, 5020 Salzburg, Austria}
\email{clemens.fuchs@sbg.ac.at}
\author{Christina Karolus}
\address[Christina Karolus]{University of Salzburg, Hellbrunnerstr. 34/I, 5020 Salzburg, Austria}
\email{christina.karolus@sbg.ac.at}
\author{Dijana Kreso}
\address[Dijana Kreso]{Graz University of Technology, Steyrergasse 30/III, 8010 Graz, Austria}
\email{kreso@math.tugraz.at}

\keywords{decomposable polynomials, linear recurrences, Brownawell-Masser inequality, Ritt's theory}
\subjclass[2010]{11B37, 11R09, 12E99, 39B12}

\begin{abstract}
We study elements of second order linear recurrence sequences $(G_n)_{n= 0}^{\infty}$ of polynomials in $\CC[x]$ which are decomposable, i.e.\@ representable as $G_n=g\circ h$ for some $g, h\in \C[x]$ satisfying $\deg g,\deg h>1$. Under certain assumptions, and provided that $h$ is not of particular type, we show that $\deg g$ may be bounded by a constant independent of $n$, depending only on the sequence.
\end{abstract}
\maketitle

\section{Introduction and Results}

Let $d\ge2$ be an integer. We consider a sequence of polynomials $(G_n)_{n=0}^{\infty}$  in $\C[x]$ satisfying the $d$-th order linear recurrence relation
\begin{equation}\label{relation}
G_{n+d}(x)=A_{d-1}(x)G_{n+d-1}(x)+\cdots+A_0(x)G_n(x), \quad n\in\N,
\end{equation}
\noindent determined by $A_0,A_1,\ldots,A_{d-1}\in \C[x]$ and initial terms $G_0,G_1,\ldots,G_{d-1}\in \C[x]$.
Let $\mathcal{G}\in \C(x)[T]$ be the characteristic polynomial of the sequence and let $\alpha_1,\ldots,\alpha_t$ be its distinct roots in the splitting field $L/\C(x)$ of $\mathcal{G}$, that is
\[
\mathcal{G}(T)=T^d-A_{d-1}T^{d-1}-\cdots - A_0=(T-\alpha_1)^{k_1}(T-\alpha_2)^{k_2}\cdots(T-\alpha_t)^{k_t},
\]
where $k_1,\ldots,k_t\in\NN$.
Then $G_n$ admits a representation of the form
\begin{align}
G_n(x)=\pi_1\alpha_1^n+\pi_2\alpha_2^n+\cdots+\pi_d\alpha_d^n,\label{binet}
\end{align}
where $\pi_i\in L[n]$ for $i=1, 2, \ldots, n$.
We say that the recurrence relation \eqref{relation} is \emph{minimal} if $(G_n)_{n=0}^{\infty}$ does not satisfy a recurrence relation with smaller $d$ and coefficients in $\C[x]$. We say that \eqref{relation} is \emph{non-degenerate} if $\alpha_i/\alpha_j\not\in \C^*$ for all $i\neq j$. Finally, we  say that \eqref{relation} is  \emph{simple} if $k_1=\cdots=k_t=1$; in this case the $\pi_i$'s lie in $L$. We also call the corresponding sequence $(G_n)_{n=0}^{\infty}$  minimal, non-degenerate and simple, respectively. In this paper, we will be concerned with second-order minimal non-degenerate simple linear recurrences.

Many Diophantine problems involving  linear recurrence sequences have been studied in the literature. For example, a famous problem is to estimate the number of zeros appearing in such a sequence, and more generally, to bound the number of solutions $n\in\N$ of the equation $G_n(x)=a$, where $a\in L$ is given (cf. \cite{fp} and the papers cited therein). Also, several authors studied the problem of giving bounds on $m$ and $n$ such that $G_n(x)=cG_m(P(x))$, $c=c(n, m)$, where $(G_n)_{n=0}^\infty$ is a linear recurrence sequence and $P$ a fixed polynomial (cf. \cite{f04,fpt02,fpt,fpt08}).

In this paper, we focus on \emph{decomposable} polynomials in second order linear recurrence sequences. A polynomial $f\in \C[x]$ with $\deg f>1$ is said to be decomposable
if it can be written as the composition $f(x)=g(h(x))$ with $g,h\in \C[x]$ and $\deg g, \deg h>1$, and \emph{indecomposable} otherwise.
The possible ways of writing a polynomial as a composition of polynomials were studied by several authors, starting with Ritt in the 1920's in his classical paper~\cite{R22}.
Results in this area of mathematics have applications to various other fields, e.g.\@ number theory, complex analysis, arithmetic dynamics, finite geometries, etc.\@  For example, there are applications to Diophantine equations of type $f(x)=g(y)$. In 2000, Bilu and Tichy~\cite{BT00},  by building on the work of  Siegel, Ritt, Fried and Schinzel,
classified the polynomials $f, g$ for which the equation $f(x)=g(y)$ has infinitely many solutions  in $S$-integers $x, y$. It turns out that such $f$ and $g$ must be representable as a composition of polynomials in a certain prescribed way.

In this paper we show that  if $(G_n)_{n=0}^{\infty}$ satisfies \eqref{relation} with $d=2$, under certain  assumptions on $G_0, G_1, A_0$ and $A_1$, if $G_n(x)=g(h(x))$ and $h(x)$ is not of particular type, then $\deg g$ may be bounded by a constant independent of $n$, depending only on the sequence (more precisely, it depends only on the degrees of $G_0, G_1, A_0, A_1$).
To describe what we mean by $h$ being of particular type and to state our results, we introduce the following notions. We say that $f,g\in \C[x]$ are {\it equivalent} if there are linear $\ell_1, \ell_2\in \C[x]$ such that $f(x)=\ell_1(x)\circ g(x)\circ \ell_2(x)$. For $f \in \C[x]$, we say that $f$ is {\it cyclic} if it is equivalent to a polynomial $g$ with $g(x)=x^n$ for some $n>1$, and we say that $f$ is {\it dihedral} if it is equivalent to $T_n$ for some $n>2$, where $T_n$ is a Chebychev polynomial, defined by the functional equation $T_n(x+1/x)=x^n+1/x^n$. Cyclic and dihedral polynomials play an important role in Ritt's theory of polynomial decomposition, as will be explained in Section \ref{sec2}.

To see that at least some exceptional cases have to be taken into account, consider e.g.\@ the well-known family of Fibonacci polynomials $F_n$, defined by
\begin{equation}\label{fibo}
F_0(x)=0, \quad F_1(x)=1, \quad F_{n+2}(x)=x F_{n+1}(x)+F_{n}(x)\mbox{ for } n\in\N.
\end{equation}
It is easy to see that for all odd $n\ge 3$, $F_n$ is an even polynomial of degree $n-1$, and hence if $n\ge5$ is odd, $F_n(x)$ can be written as $F_n(x)=g(h(x))$,
where $h(x)=x^2$ and $\deg g=(n-1)/2$. Clearly, here the degree of $g$ cannot be bounded independently of $n$. In this case, $h$ is cyclic.

Also, for Chebyshev polynomials $T_n$, which satisfy the second order linear recurrence
\[
T_0(x)=1,\quad T_1(x)=x, \quad T_{n+2}(x)=2xT_{n+1}(x)-T_{n}(x)\mbox{ for } n\in\N,
\]
it is well-known that $T_{mn}=T_m\circ T_n$ for any $m, n\in \N$. Since $\deg T_n=n$, clearly one cannot bound $\deg g$ independently of $n$ assuming $T_n(x)=g(h(x))$ and $\deg h>1$. In this case, $h$ is dihedral.

There is a third, trivial situation where it is clearly not possible to bound the degree of $g$ independently of $n$ assuming $G_n=g\circ h$, namely when $G_m(x)\in\C[h(x)]$ for every $m\in\N$. Consider for example the sequence $(F_n(h(x)))_{n=0}^\infty$, where $F_n$ is defined by \eqref{fibo} and $h\in \C[x]$. This sequence satisfies a second order linear recurrence relation and we clearly cannot bound $\deg F_n$ independently of $n$. It will be shown later that $G_m(x)\in\C[h(x)]$ for all $m\in\N$ if and only if $G_0,G_1,A_0,A_1\in\C[h(x)]$, see Lemma \ref{newl}.

We now describe our strategy and results in detail. Let $(G_n)_{n=0}^\infty$ be a minimal non-degenerate simple second order linear recurrence sequence given by \eqref{relation} (with $d=2$). Assume that $G_n$ is decomposable for some $n\in \N$  and write $G_n(x)=g(h(x))$, where $h$ is indecomposable, and thus $\deg h\geq 2$. By Gauss's lemma it follows that the polynomial $h(X)-h(x)\in\C(h(x))[X]$ is irreducible  and since $h'(X)\neq 0$, it is also separable (find details in Section~\ref{sec2}). Since $\deg h\geq 2$, there exists a root $y\neq x$ in its splitting field over $\C(h(x))$. Clearly, $h(x)=h(y)$.
As in \eqref{binet}, we have
\[
G_n(x)=\pi_1\alpha_1^n+\pi_2\alpha_2^n,
\]
where $\alpha_1, \alpha_2$  are distinct roots  of the characteristic polynomial $\mathcal{G}_1(T)=T^2-A_{1}(x)T-A_0(x)$ in its splitting field $L_1/\C(x)$, and  $\pi_1, \pi_2\in L_1$. Indeed, there is a representation of this form since by assumption the characteristic polynomial has no multiple roots. Observe that $\pi_i\alpha_i^n\neq0$ for all $n\in \N$ and $i=1,2$ by minimality.
Conjugating (in some fixed algebraic closure of $\C(x)$ containing $\alpha_1,\alpha_2$) over $\C(h(x))$ via $x\mapsto y$, we get a sequence $(G_n(y))_{n=0}^\infty$ with $G_n(y)\in\C[y]$, which satisfies the same minimal non-degenerate simple recurrence relation as $(G_n(x))_{n=0}^\infty$ with $x$ replaced by $y$. We conclude that
\[
G_n(y)=\rho_1\beta_1^n+\rho_2\beta_2^n,
\]
where $\beta_1, \beta_2$ are distinct roots  of the characteristic polynomial $\mathcal{G}_2(T)=T^2-A_{1}(y)T^{d-1}-A_0(y)$ in its splitting field $L_2/\C(y)$, and $\rho_1,\rho_2\in L_2$. Again we have that $\rho_i\beta_i^n\neq 0$ for all $n\in\N$ and $i=1,2$. Since $h(x)=h(y)$, we get $G_n(x)=G_n(y)$, that is
\begin{equation}\label{sum}
\pi_1\alpha_1^n + \pi_2\alpha_2^n=\rho_1\beta_1^n+\rho_2\beta_2^n.
\end{equation}
We view this last equation as an $S$-unit equation in function fields and seek to apply a result of Brownawell and Masser (see Theorem \ref{bm} below) to bound the height of $G_n$ and consequently the degree of $g$. However, this theorem can be applied directly only to equations in which no proper subsum vanishes.
We will show in Section~\ref{sec5} that if $h$ is not cyclic, then equation \eqref{sum} has a proper vanishing subsum if and only if
\[
\pi_1\pi_2A_0(x)^n\in \C(h(x)).
\]
In particular, the existence of a proper vanishing subsum of \eqref{sum} does not depend on the choice of the conjugate $y$ of $x$ over $\C(h(x))$. Note that if $h$ is not cyclic and $A_0(x)=a_0\in \C$, $\pi_1\pi_2=\pi\in \C$,  then there  exists a vanishing subsum of \eqref{sum} and one cannot apply the theorem in question; for example, this is the case for Chebyshev polynomials $T_n$.

We now state our main result.

\begin{Th}\label{thm1}
Let $A_0, A_1, G_0, G_1\in \C[x]$ and $(G_n)_{n=0}^\infty$ be a sequence of polynomials defined by the minimal non-degenerate simple linear recurrence
\begin{equation}\label{equat}
G_{n+2}(x)=A_{1}(x)G_{n+1}(x)+A_0(x)G_n(x), \quad n\in\N.
\end{equation}
There is a positive real constant $C=C(\{A_i, G_i : i=1,2\})$ with the following property. If for some $n$ we have $G_n(x)=g(h(x))$, where $h$ is indecomposable and neither dihedral nor cyclic, and if \eqref{sum} has no proper vanishing subsum, then it holds that $\deg g\leq C$.
\end{Th}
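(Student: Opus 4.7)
The plan is to apply the function-field Brownawell--Masser inequality to the four-term relation
\[
\pi_1\alpha_1^n + \pi_2\alpha_2^n - \rho_1\beta_1^n - \rho_2\beta_2^n = 0
\]
coming from \eqref{sum} in the compositum $K := L_1L_2$. The no-proper-vanishing-subsum hypothesis is exactly what is needed to apply that inequality; after dividing by $-\pi_2\alpha_2^n$ one gets a three-term unit equation summing to $1$, and Brownawell--Masser will yield
\[
H_K\!\left(\frac{\pi_1\alpha_1^n}{\pi_2\alpha_2^n}\right) \leq 3\bigl(|S| + 2g_K - 2\bigr),
\]
where $S$ is a finite set of places of $K$ containing the zeros and poles of all $\pi_i,\rho_j,\alpha_i,\beta_j$ and $g_K$ is the genus of $K$.

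Next, I would use the elementary inequality $H_K(fg) \geq H_K(f) - H_K(g)$ together with the non-degeneracy assumption, which forces $\alpha_1/\alpha_2 \notin \C^{*}$ and hence $H_{L_1}(\alpha_1/\alpha_2) \geq 1$, to infer $H_K(\alpha_1/\alpha_2) \geq [K:L_1]$. This translates the bound above into
\[
n\,[K:L_1] \leq 3\bigl(|S| + 2g_K - 2\bigr) + H_K(\pi_1/\pi_2),
\]
and the task reduces to showing that the right-hand side is of order $(\deg h)^2$. The height $H_K(\pi_1/\pi_2)$ equals $[K:L_1]$ times a constant bounded by the initial data (since the $\pi_i$ are rational in $\alpha_j,G_0,G_1$), and the places in $S$ all lie above one of finitely many places of $\C(x)$ or $\C(y)$ (zeros of $\Delta_1,\Delta_2,A_0,G_0,G_1$ and the points at infinity), each with at most $[K:\C(x)]=O(\deg h)$ extensions, giving $|S|=O(\deg h)$ with constants depending only on the initial data.

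The main obstacle is the genus estimate $g_K = O((\deg h)^2)$. Indecomposability of $h$, combined with the non-cyclic and non-dihedral hypotheses, should ensure via Ritt's theory (invoked in Section~\ref{sec2}) that $\tilde h(x,Y) := (h(Y)-h(x))/(Y-x)$ is irreducible over $\C(x)$, so that $[\C(x,y):\C(x)] = \deg h - 1$. The curve $\tilde h = 0$ sits in $\PP^1\times\PP^1$ with bidegree $(\deg h - 1, \deg h - 1)$, so its smooth projective model has geometric genus $O((\deg h)^2)$; Riemann--Hurwitz applied to the degree-at-most-$4$ extension $K/\C(x,y)$ then propagates this to $g_K = O((\deg h)^2)$, with the implicit constants controlled by $\deg A_0, \deg A_1$ via the bounded ramification in the quadratic extensions $L_i$. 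Plugging in, and noting that $[K:L_1] = \Theta(\deg h)$, this gives $n = O(\deg h)$; combining with the Binet bound $\deg G_n \leq c_1 n$ yields
\[
\deg g = \frac{\deg G_n}{\deg h} \leq \frac{c_1\,n}{\deg h} = O(1),
\]
with the implicit constant depending only on $\{A_i, G_i : i = 1,2\}$, as claimed.
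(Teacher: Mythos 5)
Your proposal is correct in outline and follows essentially the same route as the paper: rewrite \eqref{sum} as a three-term unit equation, apply Brownawell--Masser (Theorem~\ref{bm}) using the no-vanishing-subsum hypothesis, show $|S|=O(\deg h)$ and genus $=O(\deg h^2)$ with constants depending only on the data, use non-degeneracy to get a height lower bound growing like $n\cdot\deg h$, and use Fried's irreducibility of $(h(X)-h(Y))/(X-Y)$ (this is Fried's theorem, not Ritt's theory proper; it is the content of Lemma~\ref{extdeg}) to guarantee $[\C(x,y):\C(x)]\ge \tfrac12\deg h$, which is exactly where ``neither cyclic nor dihedral'' enters. Two sub-steps are implemented differently from the paper, both validly: for the genus you bound the $(\deg h-1,\deg h-1)$-bidegree model in $\PP^1\times\PP^1$ and then apply Riemann--Hurwitz to the degree-$\le 4$ extension $K/\C(x,y)$, whereas the paper uses Castelnuovo's inequality (Theorem~\ref{castelnuovo}) with $F_1=\C(x,\alpha_1,\alpha_2)$, $F_2=\C(y,\beta_1,\beta_2)$ plus Riemann's inequality (Theorem~\ref{riemann}); and for the key lower bound you use multiplicativity of the height in extensions, $\HH_K(\alpha_1/\alpha_2)=[K:L_1]\,\HH_{L_1}(\alpha_1/\alpha_2)\ge[K:L_1]$, which is in fact cleaner than the paper's explicit computation showing $\HH(\beta_1/\beta_2)\ge\tfrac12\HH(x)$ via $A_0(y)/A_1(y)^2$; both give a bound of order $\deg h$, which is what the final step needs. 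Two harmless inaccuracies to fix when writing this up: the base places supporting the zeros of $\pi_1,\pi_2$ (and $\rho_1,\rho_2$) are the zeros of $G_1^2-A_1G_0G_1-A_0G_0^2$ and of $A_1^2+4A_0$ (see \eqref{productpi}), not zeros of $G_0$ and $G_1$ separately, though the count $|S|=O(\deg h)$ is unaffected; and ``bounded ramification'' should be ``$O(\deg h)$ tamely ramified places, each contributing a bounded different exponent,'' which still yields $g_K=O(\deg h^2)$. The factor $3$ you quote in Brownawell--Masser is weaker than the exact constant $\tfrac12(n-1)(n-2)=1$ for three terms, but this only affects the numerical value of $C$.
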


We mention that the constant $C$ in Theorem~\ref{thm1} can be effectively computed; this is done in the proof of  the theorem. Since the bound is not very illuminating, we have not stated it above. Also note that in the theorem the situation that $G_m(x)\in \C[h(x)]$ for all $m$ is not excluded explicitly. It will be shown (see Lemma~\ref{third}) that in this case either $h$ is cyclic or equation \eqref{sum} has a proper vanishing subsum.

Theorem~\ref{thm1} resembles a result of Zannier~\cite{Z07}, who showed that if $f$ is a polynomial with $\ell$ non-constant terms and $f(x)=g(h(x))$, where $h$ is not of type $ax^k+b$, $a\neq 0$, then $\deg g\leq 2\ell(\ell-1)$. Our proof, like Zannier's proof,  involves applying Brownawell and Masser's theorem~\cite{BM86}. The application of this theorem in our proof requires a different approach and the technical details are more challenging. We remark that Zannier's result was one of the main ingredients of the proof of a conjecture of Schinzel ~\cite{Z08} by the same author, which states that for $f\in \C[x]$ with $\ell$ non-constant terms, satisfying $f=g\circ h$ for some  $g, h\in \C[x]$, the number of terms of $h$ is bounded above by $B(\ell)$, where $B$ is an explicitly computable function. Zannier's result was then used in \cite{K15+, K15} to study Diophantine equations of type $f(x)=g(y)$, where $f$ and $g$ are arbitrary polynomials with a fixed number of non-constant terms, via the criterion of Bilu and Tichy. We remark that likewise, using our results, one may study Diophantine equations of this type where $f$ and/or $g$ are elements of a second order linear recurrence sequence of polynomials. We further mention that some special cases of the latter problem have already been studied in the literature, see \cite{DT01, kp}.

To detect cases when there does not exist a vanishing subsum of \eqref{sum}, we apply several tools. We follow a Galois-theoretic approach to decomposition questions, which originated in Ritt's work~\cite{R22}, and apply some recent results on polynomial decomposition from \cite{BWZ09} and \cite{ZM}. We show that the following holds.

\begin{Th}\label{main}
Let $A_0, A_1, G_0, G_1\in \C[x]$ and $(G_n)_{n=0}^\infty$ be a sequence of polynomials defined by the minimal non-degenerate simple linear recurrence
$$G_{n+2}(x)=A_{1}(x)G_{n+1}(x)+A_0(x)G_n(x), \quad n\in\N.$$
Assume that for some $n$ we have $G_n(x)=g(h(x))$, where $h$ is indecomposable.
If $h$ is  neither dihedral nor cyclic, and it does not hold that $G_m(x)\in\C[h(x)]$ for all $m\in\N$, then \eqref{sum} has no proper vanishing subsum if $A_0(x)$ is constant and any of the following holds:
\begin{enumerate}
\item[i)]  $2G_1(x)=G_0(x)A_1(x)$,  \textnormal{i.e}.\@ $\pi_1=\pi_2$,
\item[ii)]  $G_1(x)=2A_0(x)+G_0(x)^2$, $G_0(x)=A_1(x)$,
\item[iii)]  $G_1(x)=-2A_0(x)$, $G_0(x)=A_1(x)$.
\end{enumerate}
If $h$ is not cyclic, and it does not hold that $G_m(x)\in\C[h(x)]$ for all $m\in\N$, then \eqref{sum} has no proper vanishing subsum if any of the following holds:
\begin{enumerate}
\item[i)]  $\pi_1\pi_2=\pi A_0(x)^m$, for some $\pi \in \C$, $m\geq 0$ and $\deg A_0=1$,
\item[ii)]  $\pi_1=\pi_2=\pi\in\C$ and either $\sqrt{A_1(x)^2+4A_0(x)}\in \C[x]$ or \\$\deg A_0=1$,
\item[iii)]  $G_1(x)=2A_0(x)+G_0(x)^2$, $G_0(x)=A_1(x)$ and  \\either $\sqrt{A_1(x)^2+4A_0(x)}\in \C[x]$ or $\deg A_0=1$,
\item[iv)]   $G_1(x)=-2A_0(x)$, $G_0(x)=A_1(x)$ and \\either $\sqrt{A_1(x)^2+4A_0(x)}\in\C[x]$ or $\deg A_0=1$.
\end{enumerate}
\end{Th}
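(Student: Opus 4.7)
The plan is to exploit the criterion established in Section~\ref{sec5}: when $h$ is not cyclic, equation~\eqref{sum} admits a proper vanishing subsum if and only if $\pi_1(x)\pi_2(x)A_0(x)^n\in\C(h(x))$. Since $\pi_1\pi_2$ is symmetric in $\alpha_1,\alpha_2$, Binet's formula gives
\[
\pi_1\pi_2 \;=\; \frac{G_0 G_1 A_1 + G_0^2 A_0 - G_1^2}{A_1^2 + 4A_0}\in\C(x),
\]
and substituting the hypotheses of each case one obtains $\pi_1\pi_2\in\C[x]$: explicitly, $G_0^2/4$ in Part~1~(i); $-A_0$ in Part~1~(ii)--(iii); and $\pi A_0^m$, $\pi^2$, $-A_0$, $-A_0$ in Part~2~(i)--(iv) respectively. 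Because $\C(h(x))\cap\C[x]=\C[h(x)]$, the task in each case is to show $\pi_1\pi_2 A_0^n\notin\C[h(x)]$; Lemma~\ref{newl} will then supply the contradiction, as the hypothesis $G_m\notin\C[h(x)]$ for some~$m$ excludes the trivial case.

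The central technical input is the following Galois-theoretic lemma, which I would prove first: \emph{if $h\in\C[x]$ is indecomposable and not cyclic, and $\alpha\in\C[x]$ satisfies $\alpha^N\in\C[h(x)]$ for some $N\ge 1$, then $\alpha\in\C[h(x)]$.} Let $\Gamma=\Mon(h)$ act on the Galois closure $M$ of $\C(x)/\C(h(x))$ and let $H\le\Gamma$ be the point stabilizer, so $\C(x)=M^H$. Since $\sigma(\alpha)^N=\alpha^N$ for every $\sigma\in\Gamma$, the map $\sigma\mapsto\sigma(\alpha)/\alpha$ is a homomorphism $\varphi\colon\Gamma\to\mu_N$ whose kernel $K$ satisfies $H\le K\le\Gamma$. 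Indecomposability of $h$ (Galois/Lüroth correspondence) forces $K\in\{H,\Gamma\}$; the case $K=\Gamma$ gives $\alpha\in\C(h(x))\cap\C[x]=\C[h(x)]$. If instead $K=H$, then $H\trianglelefteq\Gamma$ with cyclic quotient of order $\deg h$, but primitivity of the $\Gamma$-action on $\Gamma/H$ forces every nontrivial normal subgroup of $\Gamma$ to be transitive; since $H$ fixes its own coset, this pushes $H=\{e\}$ and $\Gamma$ cyclic, i.e., $h$ cyclic---contradicting the hypothesis.

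Equipped with this lemma, most of the cases close immediately. Part~2~(i): $\pi_1\pi_2 A_0^n=\pi A_0^{m+n}\in\C[h(x)]$ gives $A_0\in\C[h(x)]$, impossible as $\deg A_0=1<\deg h$. Part~2~(ii) with $\deg A_0=1$ is analogous. Part~1~(i): applying the lemma with $N=2$ to $G_0^2\in\C[h(x)]$ yields $G_0\in\C[h(x)]$; then $G_n=(G_0/2)D_n(A_1,-A_0)\in\C[h(x)]$ forces the Dickson polynomial $D_n(A_1,-A_0)\in\C[h(x)]$, and a Ritt-theoretic analysis (using that $h$ is neither cyclic nor dihedral) extracts $A_1\in\C[h(x)]$, whence $G_1=G_0 A_1/2\in\C[h(x)]$ and Lemma~\ref{newl} closes the case. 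For the $\sqrt{A_1^2+4A_0}\in\C[x]$ branches of Part~2~(ii)--(iv), the roots $\alpha_1,\alpha_2$ lie in $\C[x]$, and applying the lemma successively to $\alpha_1\alpha_2=-A_0$, then to $(\alpha_1^{n+1}\pm\alpha_2^{n+1})^2\in\C[h(x)]$, and finally to each $\alpha_i^{n+1}$, puts $\alpha_1,\alpha_2$, and consequently $A_0$ and $A_1$, into $\C[h(x)]$.

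The main obstacle is Part~1~(ii)--(iii), where $\pi_1\pi_2 A_0^n=-A_0^{n+1}$ is a non-zero constant, so the criterion does detect a vanishing subsum and one cannot hope to rule it out directly. Here one must argue instead that the theorem's remaining hypotheses are inconsistent: in case~(ii) $G_n=D_{n+1}(A_1,-A_0)$ and in case~(iii) $G_n=-A_0\,D_{n-1}(A_1,-A_0)$, so $G_n$ is, up to scalars, a Chebyshev polynomial composed with~$A_1$. Using Ritt's first and second theorems together with the decomposition results of~\cite{BWZ09, ZM}, one shows that every indecomposable right-composition factor of such a polynomial is either a Chebyshev piece (hence dihedral, excluded) or arises from a decomposition of~$A_1$ and can be placed rightmost by Ritt swaps, so that $A_1\in\C[h(x)]$ and thus $G_m\in\C[h(x)]$ for all~$m$, contrary to hypothesis. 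The delicate point---verifying that no Ritt swap can displace a non-cyclic, non-dihedral factor past a Chebyshev block without itself being of that special form---is the technically most demanding step in the proof.
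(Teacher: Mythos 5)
Your overall architecture coincides with the paper's: you use the criterion of Lemma~\ref{lemma} (with Lemma~\ref{either} supplying $\C(x)\cap\C(y)=\C(h(x))$), reduce each case to showing certain polynomials lie in $\C[h(x)]$, and reach the contradiction through Lemma~\ref{newl}. Your auxiliary lemma ($\alpha\in\C[x]$, $\alpha^N\in\C[h(x)]$, $h$ indecomposable and not cyclic $\Rightarrow \alpha\in\C[h(x)]$) is correct, and its kernel/primitivity proof is sound; the paper obtains the same fact from Lemma~\ref{either} together with part (1) of Lemma~\ref{ours} ($q(x)^N=q(y)^N$ gives $q(x)=\zeta q(y)\in\C(x)\cap\C(y)$). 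Your handling of the $\sqrt{A_1(x)^2+4A_0(x)}\in\C[x]$ branches of Part 2, working directly with $\alpha_1,\alpha_2\in\C[x]$ and iterating the lemma, is a genuinely different and clean alternative to the paper's route via the factorization of $D_n(X,a)-D_n(Y,a)$ in Proposition~\ref{propd} (your uniform exponent $n+1$ should be $n$ in case 2(ii), a harmless slip). Your observation that in Part 1 (ii)--(iii) the quantity $\pi_1\pi_2A_0^n$ is constant, so the criterion is automatically met and one must instead show the hypotheses inconsistent, is exactly what the paper's Lemma~\ref{lem2} does.

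The genuine gap is the step you explicitly defer: the claim that $D_i(A_1(x),-a_0)\in\C[h(x)]$ with $h$ indecomposable, neither cyclic nor dihedral, forces $A_1\in\C[h(x)]$ (needed both in your Part 1(i) ``Ritt-theoretic analysis'' and in Part 1(ii)--(iii)). As sketched --- ``every indecomposable right factor is a Chebyshev piece or can be moved rightmost by Ritt swaps'' --- this is not a proof: Ritt's second theorem only controls bidecompositions of indecomposables of coprime (distinct) degrees, so you would separately have to treat $\deg A_1\le 1$, non-coprime and equal-degree configurations, and justify that swaps cannot transport a non-cyclic, non-dihedral factor past the Dickson block; this is exactly the ``delicate point'' you leave open. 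The paper closes it without any swap analysis: since $D_i(A_1(x),-a_0)$ is a common composite of $A_1$ and $h$, Proposition~\ref{indcor} (Beals--Wetherell--Zieve) shows that if $A_1\notin\C[h(x)]$ then, $h$ being neither cyclic nor dihedral, necessarily $A_1=\ell_1\circ x^s\circ\ell_3$ is cyclic and $h=\ell_2\circ x^rP(x^s)\circ\ell_3$; then Lemma~\ref{sequen} and Proposition~\ref{invar} show that every complete decomposition of $D_i(A_1(x),-a_0)=D_i(x,-a_0)\circ A_1(x)$ has only cyclic or dihedral monodromy groups, so the indecomposable right factor $h$ would be cyclic or dihedral by Proposition~\ref{cydih}, a contradiction (the degenerate cases $\deg A_1\le1$ are disposed of first by the same monodromy argument). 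Until you either reproduce an argument of this kind or actually prove your swap claim, Parts 1(i)--(iii) of the theorem are not established by your proposal.
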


We mention that the condition $\sqrt{A_1(x)^2+4A_0(x)}\in \C[x]$ means that the roots $\alpha_1,\alpha_2$  of the corresponding characteristic  polynomial are in $\C[x]$. As clarified in the theorem, the condition $2G_1(x)=G_0(x)A_1(x)$ is equivalent to the condition $\pi_1=\pi_2$. Furthermore, we mention that if $G_0(x)=A_1(x)$, and either $G_1(x)=2A_0(x)+G_0(x)^2$ or $G_1(x)=-2A_0(x)$, then either $\pi_1=\alpha_1$ and  $\pi_2=\alpha_2$, or  $\pi_1=\alpha_2$ and  $\pi_2=\alpha_1$ (see Lemma~\ref{lem1} and Lemma~\ref{lem2} for more details).

The paper is organized as follows. In Section \ref{sec2} we shall collect some facts about polynomial decomposition; here Galois-theoretic arguments play an important role. In Section \ref{sec3} we collect auxiliary results concerning heights in function fields, state some well-known theorems from the literature, and prove three lemmas which will be used to prove our main results. In Section \ref{sec5} we give a proof of Theorem \ref{main} using results from the previous two sections. In Section \ref{sec4} we give a proof of Theorem \ref{thm1}. As already mentioned above, our proof of Theorem \ref{thm1} involves applying the theory of $S$-unit equations over function fields.

\section{Polynomial decomposition via Galois theory}\label{sec2}

Recall that a polynomial $f\in \C[x]$ with $\deg f>1$ is called \emph{indecomposable}
if it cannot be written as the composition $f(x)=g(h(x))$ with $g,h\in \C[x]$, $\deg g>1$ and $\deg h>1$. Otherwise, $f$ is said to be \emph{decomposable}. Any representation of $f$ as a functional composition of  polynomials of degree $>1$ is said to be a \emph{decomposition} of $f$. A decomposition $f=f_1\circ f_2\circ \cdots\circ f_m$ of $f$ is said to be \emph{complete} if each $f_i$ is an indecomposable polynomial.

Note that if $\mu \in \C[x]$ is linear, then there exists $\mu^{\langle-1\rangle}\in \C[x]$ such that $(\mu \circ \mu^{\langle-1\rangle})(x)=(\mu^{\langle-1\rangle}\circ \mu)(x)=x$. Thus, $g\circ h=g\circ \mu \circ \mu^{\langle-1\rangle}\circ h$. By comparison of degrees one sees that no such polynomial exists when $\deg \mu>1$.

\begin{Def}\label{mon}
Given $f\in \C[X]$ with $\deg f>1$, the \emph{monodromy group} $\Mon(f)$ of $f$  is the Galois group of $f(X)-t$ over the field $\C(t)$, where $t$ is transcendental, viewed as a group of permutations of the roots of $f(X)-t$.
\end{Def}

A lot of information about the polynomial $f$ is encoded into its mono-dromy group. By Gauss's lemma it follows that $f(X)-t$  is irreducible over $\C(t)$, so $\Mon(f)$ is a transitive permutation group. Since $f'(X)\neq 0$, it follows that $f(X)-t$ is also separable. Let $x$ be a root of $f(X)-t$ in its splitting field $L$ over $\C(t)$. Then $t=f(x)$ and $\Mon(f)=\Gal(L/\C(f(x)))$ is viewed as a permutation group on the conjugates of $x$ over $\C(f(x))$.

L\"uroth's theorem (see \cite[p.\@~13]{S00}) states that for a field $K$ satisfying $\C\subset K\subseteq \C(x)$ we have $K=\C(h(x))$ for some $h\in \C(x)$. This theorem provides a dictionary between decompositions of $f\in \C[x]$ and fields between $\C(f(x))$ and $\C(x)$. Namely, if $f(x)=g(h(x))$, then $\C(f(x))\subseteq \C(h(x))\subseteq \C(x)$. On the other hand, if $K$ is a field between $\C(f(x))$ and $\C(x)$, by L\"uroth's theorem it follows that $K=\C(h(x))$ for some $h\in \C(x)$. Since $f$ is a polynomial, $h$ can be chosen to be a polynomial by \cite[p.~16]{S00}. Then $f=g(h(x))$ for some $g\in \C[x]$.
The  fields between $\C(f(x))$ and $\C(x)$ clearly correspond to groups between the two associated Galois groups -- $\Gal(L/\C(f(x)))=\Mon(f)=:G$ and $\Gal(L/\C(x))=:H$  (the stabilizer of $x$ in $\Mon(f)$). In this way, the study of ways to represent a polynomial $f$  as a composition of lower degree polynomials reduces to a study of subgroups of the monodromy group of $f$, and more precisely to the study of groups between $H$ and $G$. Furthermore, it can be shown that $G$ has a transitive cyclic subgroup, that is that $G=HI$ for some cyclic group $I$ ($I$ can be chosen to be the inertia group at any place of the splitting field of $f(x)-t$ which lies over the infinite place of $\C(t)$); see also  \cite[Lemma~3.4]{KZ14} or \cite[Lemma~3.3]{T95}. In this way, the study of ways to represent a complex polynomial $f$  as a composition of lower degree polynomials reduces to a study of subgroups of the cyclic group $I$.

The interested reader is referred to \cite{KZ14} and \cite{ZM} to find out more about the Galois-theoretic setup for addressing decomposition questions which originated in Ritt's work~\cite{R22}. Ritt~\cite{R22} showed that any complete decomposition of a complex polynomial $f$ can be obtained from any other through a sequence of steps, each of which involves replacing two adjacent indecomposables by two others with the same composition. He then solved the equation $a\circ b=c\circ d$ in indecomposable complex polynomials, showing that the only solutions, up to composing with linear polynomials, are the trivial one $a\circ b=a\circ b$ and the non-trivial solutions
\[
x^n\circ x^kh(x^n)=x^kh(x)^n\circ x^n \ \textnormal{ and }\
T_m(x)\circ T_n(x)=T_n(x)\circ T_m(x),
\]
where $h\in \C[x]$, $n, k, m\in \N$ and $T_n$ is the $n$-th Chebyshew polynomial defined in the introduction.
We now record two results on the topic that we will repeatedely use in the sequel.

\begin{Pro}\label{invar}
Pick $f\in \C[x]$ of degree $\deg f > 1$.  For any two complete decompositions $f=f_1 \circ f_{2} \circ \cdots \circ f_m=g_1\circ \ g_{2} \circ \cdots \circ g_{n}$ of $f$,
we have that $m=n$ and $\textnormal{Mon} (f_i)\cong\textnormal{Mon} ( g_{\sigma(i)})$
for some permutation $\sigma$ of the set $\{1, 2, \ldots, m\}$ and for all $i=1, 2, \ldots, m$.
\end{Pro}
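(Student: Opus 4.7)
The plan is to deduce Proposition~\ref{invar} from the two theorems of Ritt recalled just before its statement. Ritt's first theorem says that any two complete decompositions of $f$ are connected by a finite sequence of elementary \emph{swaps}, in each of which two adjacent indecomposable factors $f_i\circ f_{i+1}$ are replaced by another pair $f_i'\circ f_{i+1}'$ of indecomposables with the same composition. This already gives $m=n$, since the length of a complete decomposition is invariant under such a swap, so only the claim about monodromy groups remains.

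Next I would use Ritt's second theorem (as recalled in the excerpt) to classify swaps: up to composing each factor with linear polynomials, every swap of indecomposable complex polynomials is the trivial one $a\circ b = a\circ b$, the cyclic one $x^n \circ x^k h(x^n) = x^k h(x)^n \circ x^n$, or the Chebyshev one $T_m\circ T_n = T_n\circ T_m$. It therefore suffices to verify that under any swap of each type the unordered pair $\{\Mon(f_i),\Mon(f_{i+1})\}$ is preserved up to isomorphism: then iterating the swaps shows that the multiset $\{\Mon(f_i):1\le i\le m\}$ is a decomposition invariant, yielding the required permutation $\sigma$.

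A small preliminary I would record is that composing any $p\in\C[x]$ on either side with a linear polynomial $\ell$ yields a polynomial with monodromy group canonically isomorphic to $\Mon(p)$: indeed, $\Mon(\ell_1\circ p)$ is the Galois group of $\ell_1(p(X))-t$ over $\C(t)$, and the substitution $s=\ell_1^{\langle-1\rangle}(t)$ identifies this with $\Mon(p)$; composition on the right is handled similarly by a change of variable in the root. This disposes of the trivial-swap case (where the new factors differ from the old only by such linear compositions) and legitimises the reduction of the cyclic and Chebyshev swaps to their normal forms. The Chebyshev case is trivial since $\{\Mon(T_m),\Mon(T_n)\}$ literally reappears.

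The main obstacle is the cyclic swap $x^n \circ x^k h(x^n) = x^k h(x)^n \circ x^n$, where $x^n$ with monodromy $\ZZ/n\ZZ$ appears on both sides but one still has to show $\Mon(x^k h(x^n))\cong \Mon(x^k h(x)^n)$. I would prove this inside the Galois framework of Section~\ref{sec2} applied to the common composition $f(x)=x^{nk}h(x^n)^n$: each of the two decompositions corresponds to a maximal chain $G\supsetneq H'\supsetneq H$ of subgroups of the monodromy group $G$ of $f$, and the monodromy group of each indecomposable factor is the permutation group induced by the larger subgroup on the cosets of the smaller. Using the presence of the transitive cyclic inertia subgroup $I$ with $G=HI$ (recalled in Section~\ref{sec2}), one can exhibit an element of $G$ which conjugates the intermediate subgroup arising on the left-hand side to the one arising on the right-hand side; this conjugation provides an isomorphism of the two permutation representations, hence of the two monodromy groups in question. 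A straightforward induction on the number of swaps then finishes the proof.
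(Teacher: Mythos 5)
The paper does not actually prove this proposition: it is quoted as Theorem~1.3 of M\"uller--Zieve \cite{ZM} (see also \cite[Thm.~5.1]{KZ14}). So your proposal is measured against that cited theorem, and its essential content is exactly the step your sketch does not deliver. Reducing to Ritt's first theorem handles $m=n$, and the linear-twist and Chebyshev swaps are indeed routine (your remark that pre/post-composition with linears preserves the monodromy group is correct). But the whole difficulty is concentrated in the first-kind swap, i.e.\ in proving $\Mon\bigl(x^kh(x^n)\bigr)\cong\Mon\bigl(x^kh(x)^n\bigr)$, and your proposed mechanism for it does not work. In the Galois picture for $f=x^n\circ x^kh(x^n)=x^kh(x)^n\circ x^n$, the two chains are $G\supsetneq H'\supsetneq H$ with $H'=\Gal(L/\C(x^kh(x^n)))$ of index $n$ in $G$, and $G\supsetneq H''\supsetneq H$ with $H''=\Gal(L/\C(x^n))$ of index $d=k+n\deg h$ in $G$. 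Since $\gcd(n,d)=\gcd(n,k)=1$ and $n,d>1$, these indices are different, so no element of $G$ can conjugate $H'$ to $H''$; the claimed conjugation does not exist. Moreover, even the objects being compared live in different groups: the monodromy group of the inner factor on the left is the image of $H'$ acting on $H'/H$, while that of the outer factor on the right is the image of all of $G$ acting on $G/H''$, so a conjugacy of intermediate subgroups would not by itself identify the two permutation representations.

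What one can get cheaply is only a containment: coprimality gives $H'\cap H''=H$ and $G=H'H''$, so the action of $H'$ on $H'/H$ is isomorphic to its action on $G/H''$, exhibiting $\Mon\bigl(x^kh(x^n)\bigr)$ as a subgroup of $\Mon\bigl(x^kh(x)^n\bigr)$ (of index dividing $n$) inside $\Sym(G/H'')$. Upgrading this to an isomorphism is genuinely nontrivial and is precisely the substance of the cited result; it requires a finer analysis (e.g.\ using the transitive cyclic inertia group $I$ and the normality of $H'$ forced by the cyclic outer factor), not a one-line conjugation. As it stands, your argument assumes the crux of Proposition~\ref{invar} rather than proving it, so there is a genuine gap; either supply a complete group-theoretic proof of the first-kind-swap case or, as the paper does, invoke \cite[Thm.~1.3]{ZM} directly.
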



\begin{Pro}\label{cydih}
Pick $f\in \C[x]$ of degree $n > 1$. Then $ \Mon(f)$ is cyclic if and only if $f$ is cyclic, in which case $\abs{\Mon(f)}=n$. Likewise, if $n > 2$, then $ \Mon(f)$ is dihedral if and only if $f$ is dihedral, in which case $\abs{\Mon(f)} = 2n$.
\end{Pro}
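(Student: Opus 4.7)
I would begin with the forward directions, which rest on the fact that linear equivalence preserves the monodromy group. It then suffices to compute $\Mon(x^n)$ and $\Mon(T_n)$: for $x^n$ the splitting field of $X^n-t$ over $\C(t)$ is $\C(t^{1/n})$, cyclic of order $n$; for $T_n$, substituting $X=Y+Y^{-1}$ transforms $T_n(X)-t=0$ into $Y^{2n}-tY^n+1=0$, so the splitting field is the rational function field $\C(Y)$ with Galois group $\langle Y\mapsto\zeta Y,\ Y\mapsto Y^{-1}\rangle\cong D_n$ of order $2n$.

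For the converse of the cyclic case, suppose $G=\Mon(f)$ is cyclic. All subgroups of $G$ are normal, so the stabilizer $H$ coincides with its core, which is trivial by faithfulness of the action on the $n$ roots; hence $|G|=n$ and $\C(x)/\C(t)$ is cyclic Galois of degree $n$. A generator $\sigma$ is a $\C$-automorphism of $\C(x)$ fixing the unique place above $\infty$ (since $f$ is a polynomial and hence totally ramified at $\infty$), so $\sigma(x)=\alpha x+\beta$ with $\alpha$ a primitive $n$-th root of unity. After translating $x$ we may assume $\sigma(x)=\alpha x$, with fixed field $\C(x^n)$. Hence $\C(t)=\C(x^n)$, forcing $f(x)=\lambda x^n+\mu$ for constants $\lambda,\mu$, so $f$ is cyclic.

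For the converse of the dihedral case with $n>2$, suppose $G=\Mon(f)\cong D_m$ has order $2m$. Combining faithfulness with the existence of a transitive cyclic subgroup $I\leq G$ (the inertia at $\infty$, cyclic of order $n$) constrains the stabilizer $H$ sharply: since the rotation subgroup $R$ is normal cyclic in $D_m$, any nontrivial $H\cap R$ would lie in the core of $H$, contradicting faithfulness; hence $H\cap R=\{e\}$ and $H$ consists of reflections together with the identity. As any product of two reflections is a nontrivial rotation, $|H|\leq 2$, and $|H|=1$ is impossible because then $I$ would be a transitive cyclic subgroup of $G$ of order $2m$, exceeding the largest cyclic subgroup $R$ of order $m$ in $D_m$. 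Thus $|H|=2$, $m=n$, $|G|=2n$, and $I=R$. It remains to identify $f$ with a Chebyshev polynomial. Using the branch cycle relation $\prod c_i=1$ together with the fact that the $\infty$-cycle lies in $R$, the finite branch cycles include an even positive number of reflections; matching against the total finite ramification $n-1$ in $\C(x)/\C(t)$ shows that exactly two finite branch points occur, both with reflection inertia, with no rotation branch points arising. A Riemann-Hurwitz computation then yields $g_L=0$, so $L=\C(y)$ is rational. The unique Möbius-conjugacy class of $D_n$ in $\Aut(\C(y)/\C)=\mathrm{PGL}_2(\C)$ is represented by $y\mapsto\zeta y$ and $y\mapsto y^{-1}$; after normalizing, $\C(t)=L^G=\C(y^n+y^{-n})$ and $\C(x)=L^H=\C(y+y^{-1})$. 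Matching the unique poles of $x$ and of $y+y^{-1}$ gives $x=\lambda(y+y^{-1})+\mu$, whence $t=T_n(\lambda^{-1}(x-\mu))$. Thus $f=T_n\circ\ell$ for a linear $\ell$, and $f$ is dihedral.

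The main obstacle is the ramification analysis in the dihedral case: showing that no rotation inertia occurs at a finite branch point, which is the step that then unlocks the Riemann-Hurwitz conclusion $g_L=0$. The argument combines the branch cycle relation in $D_n$ (forcing an even number of reflection inertias at finite places) with a tight count against the ramification budget $n-1$ of $\C(x)/\C(t)$; once $L$ is known to be rational, the explicit identification with $T_n$ is a direct consequence of the classification of finite dihedral subgroups of $\mathrm{PGL}_2(\C)$.
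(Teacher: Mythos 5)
Your argument is correct, but it is worth pointing out that the paper does not prove Proposition~\ref{cydih} at all: it is quoted from M\"uller--Zieve \cite[Lemma 3.6]{ZM} (see also \cite[Thm.~3.8]{B99}), so your self-contained proof is necessarily a different route from the paper's citation, though it is in the same Galois/ramification-theoretic spirit as those sources. The forward directions (linear equivalence preserves $\Mon$, plus the explicit computations of $\Mon(x^n)$ and $\Mon(T_n)$ via $X=Y+Y^{-1}$) and the cyclic converse (trivial core forces $H=1$, the generator fixes the totally ramified place at infinity, hence is affine of order $n$, giving $\C(t)=\C(x^n)$ and $f=\lambda x^n+\mu$ up to translation) are complete and correct. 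In the dihedral converse the group-theoretic reduction ($H\cap R=1$ since subgroups of the cyclic normal subgroup $R$ are normal in $G$, hence $|H|=2$, $m=n$, $I=R$) is right, and your ramification count does work, but three steps are compressed: (i) the positivity of the number of reflection branch cycles needs the standard fact that the inertia generators of $L/\C(t)$ generate $G$; (ii) for even $n$ the two reflection classes act on the $n$ points with different cycle structures, contributing $(n-2)/2$ and $n/2$ respectively to the budget $n-1$, so the conclusion ``exactly two finite branch points, both reflections'' requires the short case analysis showing the only admissible configuration is two reflections (one from each class when $n$ is even), with rotations excluded because each would cost at least $n/2$; (iii) in the final identification you only force the map relating $x$ and $y+y^{-1}$ to be affine via poles, but one also needs the M\"obius map relating $t$ and $y^n+y^{-n}$ to be affine --- this follows because $f$ is a polynomial (a finite pole of that map would give $f$ finite poles), and the correct conclusion is $f(x)=aT_n(\lambda^{-1}(x-\mu))+b$, i.e.\@ equivalence up to linears on both sides, not literally $t=T_n(\lambda^{-1}(x-\mu))$. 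These are presentational gaps rather than errors; filled in, your proof is a valid standalone substitute for the cited lemma, at the cost of invoking Riemann--Hurwitz and the classification of finite subgroups of $\mathrm{PGL}_2(\C)$, which the paper avoids by outsourcing the statement.
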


Recall that for $f \in \C[x]$, we say that $f$ is cyclic if it is equivalent to $x^n$ for some $n>1$, and we say  that $f$ is dihedral if it is equivalent to $T_n$ for some $n>2$. Proposition~\ref{invar}  is Theorem~1.3 in \cite{ZM}. See also \cite[Thm.~5.1]{KZ14}. Proposition~\ref{cydih} is Lemma~3.6 in \cite{ZM}. See also Theorem~3.8 in \cite{B99}. We record the following corollary.

\begin{Le}\label{sequen}
Pick $f\in \C[x]$ with $\deg f>1$. If $f$ is dihedral, then for any complete decomposition of $f$ the collection of  monodromy groups of the
indecomposable polynomials consists only of dihedral groups. Furthermore, if $f$ is cyclic, then for any complete decomposition of $f$ the collection of  monodromy groups of the
indecomposable polynomials consists only of cyclic groups.
\end{Le}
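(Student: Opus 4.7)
The plan is to exhibit, for each of the two cases, one explicit complete decomposition of $f$ whose indecomposable factors all carry monodromy of the desired type, and then invoke Proposition~\ref{invar}. That proposition asserts that any two complete decompositions of $f$ have, up to isomorphism and reordering, the same multiset of monodromy groups; so the property ``every factor has cyclic (resp.\ dihedral) monodromy'' propagates from one complete decomposition to all.

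In the cyclic case I would start from the definition: $f = \ell_1 \circ x^n \circ \ell_2$ for linear $\ell_1, \ell_2 \in \C[x]$ and $n = \deg f > 1$. Factoring $n = p_1 p_2 \cdots p_k$ into primes and using $x^a \circ x^b = x^{ab}$ gives $x^n = x^{p_1} \circ \cdots \circ x^{p_k}$. Absorbing $\ell_1$ into the leftmost factor and $\ell_2$ into the rightmost yields a composition of $k$ polynomials of prime degrees $p_1, \ldots, p_k$. Each such factor is indecomposable (a polynomial of prime degree cannot be a composition of two polynomials of degree $>1$) and is manifestly equivalent to $x^{p_i}$, hence cyclic by definition. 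Proposition~\ref{cydih} then forces each monodromy group to be cyclic, and Proposition~\ref{invar} extends the conclusion to every complete decomposition of $f$.

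The dihedral case follows the identical strategy, now starting from $f = \ell_1 \circ T_n \circ \ell_2$ with $n > 2$, factoring $n = p_1 \cdots p_k$, and using the well-known identity $T_{mn} = T_m \circ T_n$ to split $T_n = T_{p_1} \circ \cdots \circ T_{p_k}$. For each odd prime $p_i$ the resulting factor (after absorbing the end linears) is equivalent to $T_{p_i}$, hence dihedral, hence has dihedral monodromy of order $2p_i$ by Proposition~\ref{cydih}. The one subtle point — and what I expect to be the only real obstacle to a clean write-up — is the factor $T_2$ that appears when $n$ is even: $T_2$ has degree $2$ and is in fact cyclic (equivalent to $x^2$), with monodromy $\ZZ/2$. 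This counts as a dihedral group only under the convention $D_1 \cong \ZZ/2$, and the lemma must be read with that convention in mind. Once this is flagged, Proposition~\ref{invar} again transfers the property from the chosen decomposition to every complete decomposition of $f$, completing the argument.
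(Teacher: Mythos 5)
Your proof is correct and follows essentially the same route as the paper: exhibit one complete decomposition of $x^m$ (resp.\ $T_m$) using $x^{ab}=x^a\circ x^b$ (resp.\ $T_{ab}=T_a\circ T_b$), identify the monodromy groups of the prime-degree factors via Proposition~\ref{cydih}, and transfer the conclusion to every complete decomposition by Proposition~\ref{invar}. Your remark about the degree-two factor $T_2$ (monodromy $\ZZ/2\ZZ$, dihedral only under the convention that the dihedral group of order $2$ is allowed) flags a point the paper's proof passes over silently, and is harmless for the way the lemma is used later.
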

\begin{proof}
By Proposition \ref{cydih}, it suffices to prove the statement in the cases $f(x)=T_m(x)$ and $f(x)=x^m$ for $m\in\N$, respectively.
Note that since $T_{mn}(x)=T_m(T_n(x))$ for any $m, n\in \N$ and $\Mon(f)$ is dihedral if and only if $f$ is dihedral, for any $m\in \N$ there exists a complete decomposition of $T_m(x)$ such that the collection of  monodromy groups of the
indecomposable polynomials consists only of dihedral groups. By Proposition~\ref{invar}, for any complete decomposition of $T_m(x)$ the collection of  monodromy groups of the
indecomposable polynomials consists only of dihedral groups. By the same argument, for any complete decomposition of $x^m$ the collection of  monodromy groups of the
indecomposable polynomials consists only of cyclic groups.
\end{proof}

In the literature, quite often Ritt's and related results are expressed in terms of Dickson polynomials $D_n(x, a)$ (with parameter $a$), as they satisfy
\begin{equation}\label{dick}
D_n(2ax, a^2)=2a^nT_n(x),\ a\neq 0, \quad D_n(x, 0)=x^n.
\end{equation}
 We refer to Turnwald's paper~\cite{T95} for various properties of Chebyshev and Dickson polynomials. We now list some that will be of importance to us in this paper.

\begin{Pro}\label{propd}
All of the following holds:
\begin{itemize}
\item  $T_0(x)=1$, $T_1(x)=x$, $T_n(x)=2xT_{n-1}(x)-T_{n-2}(x)$, $n\geq 2$.
\item $D_0(x, a)=2$, $D_1(x, a)=x$, $D_n(x, a)=xD_{n-1}(x, a)-aD_{n-2}(x, a)$, $n\geq 2$.
\item $T_{mn}(x)=T_m(T_n(x))$ for any $m, n\in \N$.
\item $D_{mn}(x, a)=D_m(D_n(x, a), a^n)$ for any $m, n\in \N$.
\item $D_n(x, 0)=x^n$.
\item $D_n(x+a/x,a)=x^n+(a/x)^n$.
\item $D_n(x+y, xy)=x^n+y^n$.
\item Let $n\geq 2$ and let $\zeta_n\in \C$ be a primitive $n$-th root of unity. Put $\gamma_k=\zeta_n^k+\zeta_n^{-k}$ and $\delta_k=\zeta_n^k-\zeta_n^{-k}$ \textup{(}so that $\gamma_k^2-4=\delta_k^2$\textup{)}. Then
\end{itemize}
\[
D_n(x, a)-D_n(y,a)=(x-y) \prod_{k=1}^{(n-1)/2} (x^2-\gamma_kxy +y^2+\delta_k^2a),
\]
when $n$ is odd and
\[
D_n(x, a)-D_n(y,a)=(x-y)(x+y) \prod_{k=1}^{(n-2)/2} (x^2-\gamma_kxy +y^2+\delta_k^2a),
\]
when $n$ is even.
\end{Pro}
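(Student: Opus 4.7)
The plan is to take items 1 and 2 as the defining recurrences for $T_n$ and $D_n$ and derive the remaining items from them; most of the work is concentrated in items 6 and 8. Item 5 is essentially immediate: setting $a = 0$ in the recurrence of item 2 yields $D_n(x, 0) = x D_{n-1}(x, 0)$ for $n \geq 2$, and induction from the base case $D_1(x, 0) = x$ gives $D_n(x, 0) = x^n$.

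I would then prove item 6 by induction on $n$ using the recurrence of item 2. The cases $n = 0, 1$ are immediate. For the inductive step, writing $y = x + a/x$,
\[
D_n(y, a) = (x + a/x)\bigl(x^{n-1} + (a/x)^{n-1}\bigr) - a\bigl(x^{n-2} + (a/x)^{n-2}\bigr),
\]
and the right-hand side collapses to $x^n + (a/x)^n$ after expansion. Item 7 is then an immediate specialization: setting $a = xy$ and $u = x$ in item 6 gives $D_n(x + y, xy) = x^n + y^n$. Item 4 is a clean algebraic consequence: writing $X = u^n$ and $Y = (a/u)^n$, one has $X + Y = D_n(u + a/u, a)$ by item 6 and $XY = a^n$, so item 7 yields
\[
D_{mn}(u + a/u, a) = X^m + Y^m = D_m(X + Y, XY) = D_m\bigl(D_n(u + a/u, a), a^n\bigr),
\]
and since $u + a/u$ takes infinitely many values as $u$ varies, this is a polynomial identity in the first argument. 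Item 3 then follows by combining item 4 with relation \eqref{dick} at $a = 1$, which gives $T_n(x) = D_n(2x, 1)/2$ and hence $T_m(T_n(x)) = D_m(D_n(2x, 1), 1)/2 = D_{mn}(2x, 1)/2 = T_{mn}(x)$.

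The only step that requires genuine work, and the one I expect to be the main obstacle, is item 8. Substitute $x = u + a/u$ and $y = v + a/v$ and use item 6 to write
\[
D_n(x, a) - D_n(y, a) = (u^n - v^n) + a^n\bigl(1/u^n - 1/v^n\bigr) = (u^n - v^n)\bigl(1 - a^n/(uv)^n\bigr).
\]
Factoring over $n$-th roots of unity gives $u^n - v^n = \prod_{k=0}^{n-1}(u - \zeta_n^k v)$ and $1 - a^n/(uv)^n = \prod_{k=0}^{n-1}\bigl(1 - \zeta_n^{-k} a/(uv)\bigr)$, and I would then regroup the $k$-th factors of the two products as
\[
(u - \zeta_n^k v)\bigl(1 - \zeta_n^{-k} a/(uv)\bigr) = x - \bigl(\zeta_n^k v + \zeta_n^{-k} a/v\bigr).
\]
For $k = 0$ this equals $x - y$, and for $k = n/2$ (when $n$ is even) it equals $x + y$. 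For the remaining indices I would pair the factor at $k$ with that at $n - k$ and expand the product; using $v^2 + a^2/v^2 = y^2 - 2a$ together with $\gamma_{2k} = \gamma_k^2 - 2$, the linear-in-$x$ cross term reduces to $-\gamma_k xy$ and the constant term becomes $y^2 - 2a + a\gamma_{2k} = y^2 + \delta_k^2 a$, giving exactly the quadratic factors in the statement. Collecting these with the linear factors from $k = 0$ and, when applicable, $k = n/2$ yields the two stated formulas. The main obstacle is the bookkeeping of this pairing and the algebraic simplification inside each quadratic factor; with items 2 and 6 in hand, everything else reduces to algebra.
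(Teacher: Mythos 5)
Your derivation is correct, but it is worth noting that the paper does not prove Proposition~\ref{propd} at all: these are standard facts about Chebyshev and Dickson polynomials, and the paper simply refers the reader to Turnwald's survey \cite{T95} (see also the remarks around \eqref{dick}). What you supply is a self-contained elementary verification, taking the recurrences in the first two items as definitions and deducing everything else: item 5 and item 6 by induction on the recurrence, item 7 by the substitution $a=xy$, item 4 via the symmetric-function identity $X+Y=D_n(u+a/u,a)$, $XY=a^n$, item 3 by translating through $T_n(x)=D_n(2x,1)/2$ from \eqref{dick}, and item 8 by parametrizing $x=u+a/u$, $y=v+a/v$, factoring $(u^n-v^n)\bigl(1-a^n/(uv)^n\bigr)$ over the $n$-th roots of unity, and pairing the factors at $k$ and $n-k$; the algebra you indicate ($v^2+a^2/v^2=y^2-2a$, $\gamma_{2k}=\gamma_k^2-2$, $\gamma_k^2-4=\delta_k^2$) does produce exactly the quadratic factors $x^2-\gamma_kxy+y^2+\delta_k^2a$, and the identities extend from the dense set of parametrized $(x,y,a)$ to polynomial identities. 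Two trivial caveats: $D_n(x,0)=x^n$ only for $n\ge1$ (since $D_0=2$), and the functional equation $T_n(x+1/x)=x^n+1/x^n$ stated in the paper's introduction is really the normalization of $D_n(x,1)$ rather than of the classical $T_n$ defined by the recurrence in item 1 — your use of the recurrence together with \eqref{dick} is the consistent choice. So your route is more explicit than the paper's (which buys a citation-free, verifiable argument at the cost of a page of bookkeeping), while the paper's choice simply outsources these classical identities to \cite{T95}.
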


For the proof of Theorem \ref{main} we will also need the following result about polynomials with a common composite, which can be deduced from a result of Beals, Wetherell and Zieve~\cite[Thm.~5.1]{BWZ09}.
If $f_1, f_2\in \C[x]$ are non-constant polynomials for which there exist non-constant $u, v\in \C[x]$ such that $u(f_1(x))=v(f_2(x))$, then $f_1$ and $f_2$ are said to have a \emph{common composite}. `Most' pairs of complex polynomials have no common composite (this follows to the most part already from Ritt's results, see \cite{BWZ09} for the details). The following fact will be repeatedy used in our proof of Theorem~\ref{main}.

\begin{Pro}\label{indcor}
Suppose $f_1, f_2\in \C[x]$ satisfy $\deg f_1>1, \deg f_2>1$ and $f_2$ is indecomposable. Then $f_1$ and $f_2$ have a common composite  if and only if there are linear polynomials $\ell_1, \ell_2, \ell_3\in \C[x]$ such that one of the following holds:
\begin{itemize}
\item $f_1(x)=\ell_1(x)\circ x^rP(x^n)\circ \ell_3(x)$ and $f_2(x)=\ell_2(x)\circ x^n\circ \ell_3(x)$, where $r, n>0$, $P\in \C[x]$, $\gcd(\deg f_1, \deg f_2)=1$ and $n$ is prime.
\item $f_1(x)=\ell_1(x)\circ x^n\circ \ell_3(x)$ and $f_2(x)=\ell_2(x)\circ x^rP(x^n)\circ \ell_3(x)$, where $r, n>0$, $P\in \C[x]$, $\gcd(\deg f_1, \deg f_2)=1$ and $x^rP(x^n)$ is indecomposable, so in particular $\gcd(r, n)=1$.
\item $f_1(x)=\ell_1(x)\circ D_m(x, \alpha) \circ \ell_3(x)$,  $f_2(x)=\ell_2(x)\circ D_n(x, \alpha) \circ \ell_3(x)$, where $m, n>1$, $\alpha\in \C$, $\gcd(\deg f_1, \deg f_2)=1$ and $n$ is prime.
\item $f_1(x)\in \C[f_2(x)]$.
\end{itemize}
\end{Pro}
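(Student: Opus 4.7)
The plan is to deduce this from Theorem~5.1 of \cite{BWZ09}, which classifies all pairs of non-constant polynomials having a common composite without any indecomposability hypothesis; the role of the hypothesis ``$f_2$ is indecomposable'' is only to collapse the raw output of that classification into the four listed cases. First I would dispatch the ``if'' direction by exhibiting explicit common composites. Case (iv) is trivial: if $f_1 = w(f_2)$ for some $w \in \C[x]$, then $f_1$ and $f_2$ share the common composite $f_1 = w \circ f_2$. For cases (i) and (ii), the classical Ritt swap $x^n \circ (x^r P(x)^n) = (x^r P(x^n)) \circ x^n$ produces a common composite of $\ell_1 \circ x^r P(x^n) \circ \ell_3$ and $\ell_2 \circ x^n \circ \ell_3$ after absorbing the linear $\ell_i$. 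For case (iii) the identity $D_m(D_n(x,\alpha), \alpha^n) = D_{mn}(x, \alpha) = D_n(D_m(x,\alpha), \alpha^m)$, recorded in Proposition~\ref{propd}, does the same job.

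For the ``only if'' direction, I would apply \cite[Thm.~5.1]{BWZ09} to the pair $(f_1, f_2)$. Its output is one of the following: either $f_1 \in \C[f_2]$ (or symmetrically $f_2 \in \C[f_1]$), delivering case (iv), or there exist linear $\ell_1, \ell_2, \ell_3 \in \C[x]$ and coprime positive integers such that $(f_1, f_2)$ sits, up to order, in a Ritt swap of either the cyclic type $\{x^n,\ x^r P(x^n)\}$ or the Dickson type $\{D_m(x,\alpha),\ D_n(x,\alpha)\}$ with common right-hand factor $\ell_3$. Note that a relation $f_2 = w(f_1)$ with $\deg w > 1$ and $\deg f_1 > 1$ contradicts indecomposability of $f_2$, so $w$ must be linear and we recover $f_1 = w^{\langle-1\rangle} \circ f_2 \in \C[f_2]$, i.e.\ case (iv) again.

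In the cyclic Ritt case, indecomposability of $f_2$ picks out which slot $f_2$ fills. If $f_2 = \ell_2 \circ x^n \circ \ell_3$, then indecomposability forces $n$ to be prime and we land in case (i); the coprimality of degrees from the Ritt swap provides the required $\gcd(\deg f_1, \deg f_2) = 1$. If instead $f_2 = \ell_2 \circ x^r P(x^n) \circ \ell_3$, then $x^r P(x^n)$ must itself be indecomposable, which is precisely case (ii) and forces $\gcd(r,n) = 1$. In the Dickson Ritt case, the pair is $\{\ell_1 \circ D_m(x, \alpha) \circ \ell_3,\ \ell_2 \circ D_n(x, \alpha) \circ \ell_3\}$ with $\gcd(m,n)=1$; indecomposability of $f_2$ forces the Dickson polynomial on its side to have prime degree, which after relabelling gives case (iii). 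Here I use that $x^n$ and $D_n(x, \alpha)$ are indecomposable exactly when $n$ is prime, which follows from Proposition~\ref{cydih} together with the standard identities $x^{mn} = x^m \circ x^n$ and $D_{mn}(x,\alpha) = D_m(D_n(x,\alpha), \alpha^n)$.

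The main obstacle is to verify that the raw output of \cite[Thm.~5.1]{BWZ09} can indeed be matched against the four listed shapes without loss of generality, in particular that the same inner linear $\ell_3$ can be extracted on both sides and that the various ``coprime degree'' side conditions transfer correctly from $(m,n)$ to $(\deg f_1, \deg f_2)$. Once that parsing is granted, the remainder of the argument is a short bookkeeping exercise with Ritt's swap identities and the characterization of indecomposable $x^n$ and $D_n(x,\alpha)$.
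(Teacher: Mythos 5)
Your proposal takes essentially the same route as the paper: the paper offers no argument for Proposition~\ref{indcor} beyond asserting that it can be deduced from \cite[Thm.~5.1]{BWZ09}, and your write-up is exactly that deduction, correctly using indecomposability of $f_2$ to force the common right factor to be linear (or else $f_1\in\C[f_2]$) and the cyclic/Dickson swap identities together with the prime-degree criterion for the converse. So the proposal is correct and matches the paper's approach, only spelled out in more detail than the paper itself.
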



\section{Preliminaries and auxiliary results}\label{sec3}

Our strategy involves the use of height functions in function fields.  In what follows, let $L$ be a finite extension of the rational function field $\C(x)$.
For $a\in \C$ define the valuation $\nu_a$ as follows. For $q(x)\in \C(x)$ let $q(x)=(x-a)^{\nu_a(q)}A(x)/B(x)$, where $A, B\in \C[x]$ and $A(a)B(a)\neq0$. Furthermore,  denote by $\nu_{\infty}$ the (only) infinite valuation which is defined by   $\nu_{\infty}(Q):=\deg B-\deg A$ for $Q(x)=A(x)/B(x)$, where $A, B\in \C[x]$. These are all (normalized) discrete valuations on $\C(x)$. All of them can be extended in at most $\left[L:\C(x)\right]$ ways to a discrete valuation on $L$ and again in this way one obtains all discrete valuations on $L$. Furthermore, for $f\in L^*$ the sum formula $\sum\limits{\nu(f)}=0$ holds, where the sum is taken over all discrete valuations on $L$. We just mention that there are different equivalent descriptions of the notion of discrete valuations as e.g. places or the rational points on a(ny) nonsingular complete curve over $\C$ with function field $L$.

Now, define the {\it projective height} $\mathcal{H}$ of $u_1,\ldots,u_n\in L/\C(x)$, where $n\ge 2$ and not all $u_i$ zero, via
\begin{align}
\mathcal{H}(u_1,\ldots,u_n)=-\sum\limits_{\nu}\min(\nu(u_1),\ldots,\nu(u_n)).
\end{align}
Also, for a single element $f\in L^*$, we set
\begin{align}
\mathcal{H}(f)=-\sum\limits_{\nu}\min(0,\nu(f)).
\end{align}
In both cases the sum is taken over all discrete valuations $\nu$ on $L$. Note that $\nu(f)\neq0$ only for a finite number of valuations $\nu$ and that $\HH(f)=\sum_{\nu}\max (0,\nu(f))$ if $f\in L^*$, by the sum formula. For $f=0$, we define $\HH(f)=\infty$. We call $a$ a {\it zero} of $f$ if $\nu_a(f)>0$ and a {\it pole} of $f$ if $\nu_a(f)<0$. We state some basic properties of the projective height.

\begin{Le} \label{propert}
Denote as above by $\mathcal{H}$ the projective height on $L/\C(x)$. Then for $f,g\in L^*$ the following properties hold:
\begin{center}
\begin{enumerate}
\item $\HH(f)\ge 0$ and $\HH(f)=\HH(1/f)$,
\item $\HH(f)-\HH(g)\le\mathcal{H}(f+g)\le \HH(f)+\HH(g)$,\label{plus}
\item $\HH(f)-\HH(g)\le\HH(fg)\le \HH(f)+\HH(g)$,\label{mult}
\item $\HH(f^n)=|n|\cdot\HH(f)$,
\item $\HH(f)=0\Leftrightarrow f\in \C^*$,
\item $\HH(A(f))=\deg A\cdot \HH(f)$ for any $A\in \C[T]\backslash\left\{0\right\}$.\label{last}
\end{enumerate}
\end{center}
\end{Le}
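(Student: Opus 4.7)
The overall strategy is to reduce each property to a pointwise statement about a single discrete valuation $\nu$, and then sum over all $\nu$. The only input needed is the standard triple: $\nu(fg)=\nu(f)+\nu(g)$, $\nu(f+g)\ge\min(\nu(f),\nu(g))$, and $\nu(c)=0$ for $c\in\C^*$, together with the sum formula $\sum_\nu\nu(f)=0$ for $f\in L^*$. I will repeatedly use the identities $\max(0,a)=-\min(0,-a)$ and $\max(0,a)-\min(0,a)=|a|$, so that the definition of $\HH$ can be rewritten as $\HH(f)=\sum_\nu\max(0,\nu(f))$ by the sum formula.

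For (1), non-negativity is immediate from the rewriting above, and $\HH(1/f)=\HH(f)$ follows by replacing $\nu(f)$ by $-\nu(f)$ and swapping $\min$ and $\max$. For (2) I would apply $\nu(f+g)\ge\min(\nu(f),\nu(g))$ valuation-wise; the upper bound $\HH(f+g)\le\HH(f)+\HH(g)$ then follows because $\min(0,a+b)\ge\min(0,a)+\min(0,b)-\text{(nonneg error)}$, more cleanly written as $\max(0,\nu(f+g))\le\max(0,\nu(f))+\max(0,\nu(g))$; the lower bound follows by writing $f=(f+g)+(-g)$ and applying the upper bound, together with $\HH(-g)=\HH(g)$ (which itself follows from (3) applied to $-1\in\C^*$, or directly from $\nu(-g)=\nu(g)$). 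For (3) the argument is the same but easier: $\nu(fg)=\nu(f)+\nu(g)$ yields the upper bound termwise, and the lower bound follows by writing $f=(fg)\cdot g^{-1}$ and using (1). Property (4) is then immediate from (3) by induction for $n\ge 0$, and from (1) for $n<0$; alternatively, note $\max(0,n\nu(f))=|n|\max(0,\text{sgn}(n)\nu(f))$ valuation-wise and sum.

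For (5), one direction is trivial since $\nu(c)=0$ for all $\nu$ and $c\in\C^*$. For the converse, $\HH(f)=0$ forces $\nu(f)\ge 0$ for every $\nu$, so by the sum formula $\nu(f)=0$ everywhere; this means $f$ has no pole on the complete nonsingular model of $L$, hence lies in the ring of global regular functions, which over the algebraically closed base field $\C$ is just $\C$, and $f\in\C^*$ follows as $f\ne 0$. Finally (6) is the property that requires a genuine argument beyond termwise inequalities: factor $A(T)=c\prod_{i=1}^{\deg A}(T-\alpha_i)$ with $\alpha_i\in\C$, and observe that for any valuation $\nu$ with $\nu(f)<0$ we have $\nu(f-\alpha_i)=\nu(f)$ for each $i$, while for $\nu$ with $\nu(f)\ge 0$ we have $\nu(f-\alpha_i)\ge 0$. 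Consequently $\min(0,\nu(A(f)))=(\deg A)\cdot\min(0,\nu(f))$ valuation-wise, and summing gives the equality.

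The main obstacle is precisely (6): the naïve bound from (3) only gives an inequality, and one must exploit that the roots $\alpha_i$ are constants (so they contribute no new poles and, at each pole of $f$, $f-\alpha_i$ has the same order of pole as $f$). The other substantive point is (5), which depends on algebraic closedness of $\C$ and on the sum formula; once these two are in hand, everything else is a formal manipulation of the definition.
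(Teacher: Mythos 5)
Your proposal is correct, and for items (1)--(4) it runs along essentially the same valuation-wise lines as the paper (pointwise inequalities for $\min(0,\cdot)$, the sum formula to pass between $\min$ and $\max$, and the trick $f=(fg)g^{-1}$, resp.\ $f=(f+g)+(-g)$, for the lower bounds). The genuine divergence is in item (6). The paper proves $\HH(A(f))=\deg A\cdot\HH(f)$ by induction on $\deg A$: it subtracts $A(0)$, writes $A(T)-A(0)=T^mA_1(T)$ with $A_1(0)\neq0$, and uses the strict triangle inequality to show $\max(0,\nu(f^m)+\nu(A_1(f)))=\max(0,\nu(f^m))+\max(0,\nu(A_1(f)))$, so the height splits and the induction closes. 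You instead factor $A(T)=c\prod_i(T-\alpha_i)$ over the algebraically closed field $\C$ and observe that at a pole of $f$ each factor satisfies $\nu(f-\alpha_i)=\nu(f)$, while away from poles $\nu(f-\alpha_i)\ge0$, giving the valuation-wise identity $\min(0,\nu(A(f)))=\deg A\cdot\min(0,\nu(f))$ directly; this is shorter and avoids the induction, at the (negligible, over $\C$) cost of invoking the factorization. Both arguments, like the lemma as stated, tacitly assume $A(f)\neq0$, which only fails for constant $f$, so no harm is done. For (5) you replace the paper's citation of Stichtenoth (every transcendental element has a zero and a pole) by the sum formula plus the standard fact that a function without poles on the complete nonsingular model is constant over $\C$; this is an equivalent justification. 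One small caveat: your first suggestion for (4), ``immediate from (3) by induction,'' would only yield the inequality $\HH(f^n)\le n\,\HH(f)$; your alternative direct argument via $\nu(f^n)=n\,\nu(f)$ is the one that gives equality and matches what the paper intends by ``follows immediately from the definition.''
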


\begin{proof}
$\HH(f)\ge 0$ clearly holds by definition.
To show that $\mathcal{H}(f+g)\le \HH(f)+\HH(g)$, note that $\min(0,\nu(f+g))\ge\min(0,\nu(f))+\min(0,\nu(g))$. Namely, if $\min(0,\nu(f+g))=0$, this clearly holds. Otherwise, by the definition of discrete valuations we have $\nu(f+g)\ge\min(\nu(f),\nu(g))$ and it follows that $\min(0,\nu(f+g))=\nu(f+g)\ge\min(0,\nu(f))+\min(0,\nu(g))$. Hence, $\HH(f+g)=-\sum_{\nu}\min(0,\nu(f+g))\le-\sum_{\nu}\min(0,\nu(f))-\sum_{\nu}\min(0,\nu(g))=\HH(f)+\HH(g)$.
Similarly, $\HH(fg)\le\HH(f)+\HH(g)$ follows from $\nu(fg)=\nu(f)+\nu(g)$.

We now show that $\HH(f)=\HH(1/f)$. Since $f\neq0$, clearly $\nu(f^{-1})=-\nu(f)$ and therefore we have $\min(0,\nu(f))=-\max(0,\nu(f^{-1}))$. By the sum formula it follows that $\HH(f)=-\sum_\nu{\min(0,\nu(f))}=\sum_\nu{\max(0,\nu(f^{-1}))}=-\sum_\nu{\min(0,\nu(f^{-1}))}=\HH(f^{-1})$.

Next we show that $\HH(f)-\HH(g)\le\mathcal{H}(fg)$. We have $\HH(f)=\HH(fgg^{-1})\le\HH(fg)+\HH(g^{-1})=\HH(fg)+\HH(g)$, so $\HH(fg)\ge \HH(f)-\HH(g)$. Analogously, one concludes $\HH(f+g)\ge\HH(f)-\HH(g)$.

For $n\in\NN_0$, the identity $\HH(f^n)=|n|\cdot\HH(f)$ follows immediately from the definition of discrete valuations. Since $\HH(f^n)=\HH(f^{-n})$, the statement also holds for negative integers $n$.

By \cite[Cor.~I.1.19, p.~8]{st}, any transcendental element $f\in L$ has at least one zero and one pole. So if $f$ is transcendental, there is a valuation $\nu$ on $L$ such that $\nu(f)<0$ and consequently $\HH(f)>0$. On the other hand, $\HH(f)=0$ for any $f\in \C^*$.

To see that ~\eqref{last} holds, observe that
by (\ref{plus}) and (\ref{mult}), it follows that if $a\in \C$, then $\HH(af)=\HH(f+a)=\HH(f)$. We argue by induction on $n=\deg A$. The statement holds for $n=0$ since in this case $\HH(A(f))=0=\deg A\cdot\HH(f)$. Also, if $n=1$, and say $A(T)=aT+b$ where $a,b\in \C$, then $\HH(A(f))=\HH(af+b)=\HH(f)=\deg A\cdot\HH(f)$.
Let us now assume that $\deg A=n+1$ and that the statement is true for lower-degree polynomials.
If $A(T)=aT^{n+1}+b$, with $a,b\in \C$, the claimed equality clearly holds. Otherwise, let $m>0$ be the unique integer such that $A(T)-A(0)=T^mA_1(T)$ and $A_1(T)\in \C[T]$ is such that $A_1(0)\neq 0$. Note that $\deg A_1=n+1-m$, so that we can apply the induction hypothesis to $A_1$. We claim that
\begin{align*}
\max(0,\nu(f^m)+\nu(A_1(f)))=\max(0,\nu(f^m))+\max(0,\nu(A_1(f))).
\end{align*}
Indeed, if $\nu(f^m)>0$ then $\nu(f)>0$, and by the strict triangle inequality for valuations it follows that $\nu(A_1(f))=0$ for $A_1(0)\neq 0$. On the other hand, if $\nu(f^m)<0$, and consequently $\nu(f)<0$, then (again by the strict triangle inequality) we have $\nu(A_1(f))<0$. So the claimed equality holds in any case. We conclude
\begin{align*}
\HH(A(f))&=\HH(A(f)-A(0))=\HH(f^mA_1(f))=\sum_\nu\max(0,\nu(f^mA_1(f)))\\
&=\sum_\nu\max(0,\nu(f^m)+\nu(A_1(f)))\\
&=\sum_\nu\left[\max(0,\nu(f^m))+\max(0,\nu(A_1(f)))\right]\\
&=\HH(f^m)+\HH(A_1(f))=m\cdot\HH(f)+(n+1-m)\cdot\HH(f)\\
&=\deg A\cdot\HH(f).
\end{align*}
\end{proof}
We use the following result due to Brownawell and Masser taken from \cite{fz} (more precisely, this is a direct consequence of \cite[Thm.~B and Cor.~1]{BM86}), which gives an upper bound for the height of $S$-units, which arise as a solution of certain $S$-unit-equations. Recall that for a set $S$ of discrete valuations, we call an element of $L$ an $S${\it-unit}, if it has poles and zeros only at places in $S$, or equivalently, the set of $S$-units in $L$ is
\[
\mathcal{O}_S^*=\{f\in L: \nu(f)=0\mbox{ for all }\nu\notin S\}.
\]

\begin{Th}[Brownawell-Masser] \label{bm}
Let  $F/\C$ be a function field of one variable of genus $\g$. Moreover, let  $u_1,\ldots, u_n$ be not all constant $S$-units for a finite set $S$ of discrete valuations, and
$$1+u_1+u_2+\ldots +u_n=0,$$
where no proper subsum of the left side vanishes. Then it holds
\begin{align}
\max\limits_{i=1,\ldots,n}\HH(u_i)\le\frac{1}{2}(n-1)(n-2)(|S|+2\g-2).
\end{align}
\end{Th}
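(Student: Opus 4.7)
The plan is to recover the classical Wronskian proof of Brownawell and Masser, which bounds the height of $S$-units satisfying a relation with no vanishing subsum via determinantal identities together with careful pole-counting.

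First I would fix a non-zero derivation $\partial$ on $F/\C$, for instance $\partial=d/dt$ for some transcendental $t\in F$, so that the associated differential $\omega=dt$ has canonical divisor of degree $2\g-2$. Let $T$ denote the support of $\mathrm{div}(\omega)$ together with the (finitely many) places where $t$ itself fails to be a uniformizer, and set $S^{*}=S\cup T$. At any place $\nu\notin S^{*}$ the units $u_i$ are units and $\partial$ is regular, so all rational expressions constructed from $\partial$ and the $u_i$ will be regular at $\nu$; all pole contributions will therefore be confined to $S^{*}$.

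Next I would exploit the relation. Differentiating $1+u_1+\cdots+u_n=0$ up to order $n-1$ yields the homogeneous linear system $\sum_{i=1}^{n}\partial^{k}u_i=0$ for $k=1,\ldots,n-1$, an $(n-1)\times n$ system in the unknowns $u_1,\ldots,u_n$. The no-proper-subsum hypothesis forces any $n$ of the elements $1,u_1,\ldots,u_n$ to be $\C$-linearly independent, so in characteristic zero their Wronskian does not vanish; consequently the system has rank $n-1$. Cramer's rule then writes each $u_j$ as a signed ratio of two $(n-1)\times(n-1)$ minors $M_i=\det(\partial^{k}u_j)_{k;\,j\ne i}$. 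Factoring $u_j$ out of column $j$ converts $M_i$ into $\bigl(\prod_{j\ne i}u_j\bigr)$ times a determinant in the higher logarithmic derivatives $\partial^{k}u_j/u_j$. This logarithmic Wronskian is the key object: it is regular outside $S^{*}$, and a row-reduction argument bounds its pole order at each $\nu\in S^{*}$ by $\binom{n-1}{2}$ plus a contribution weighted by $\nu(\omega)$.

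Putting this together, for each pair $i,j$ the divisor of $u_j/u_i=\pm M_j/M_i$ is supported in $S^{*}$, with pole orders bounded by $(n-1)(n-2)$ in total per place of $S$ and by a quantity proportional to $|\nu(\omega)|$ at the remaining places. Summing over $S^{*}$ by means of the sum formula, together with the identity $\sum_{\nu}\nu(\omega)=2\g-2$, yields the stated bound $\HH(u_j)\le\tfrac{1}{2}(n-1)(n-2)(|S|+2\g-2)$ after a short computation. The main obstacle is precisely the pole-counting step: one must show that the logarithmic Wronskian $\det(\partial^{k}u_j/u_j)_{1\le k,j\le n-1}$ has pole order at most $\binom{n-1}{2}$ at each $\nu\in S$, which requires a careful row-operation argument replacing each row by successive differences so that higher logarithmic derivatives are converted into lower-order quantities of order $1$ at $\nu$. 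Once this combinatorial lemma is in hand, the remainder of the proof is bookkeeping with the sum formula and the degree of the canonical class.
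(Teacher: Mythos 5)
This statement is not proved in the paper at all: it is quoted verbatim (via \cite{fz}) as a consequence of Theorem~B and Corollary~1 of Brownawell--Masser \cite{BM86}, so the only meaningful comparison is with the classical Wronskian proof, which is indeed the strategy you outline. However, your sketch has a genuine gap at its pivotal step. You claim that the hypothesis that no proper subsum of $1+u_1+\cdots+u_n$ vanishes forces any $n$ of the elements $1,u_1,\ldots,u_n$ to be $\C$-linearly independent, hence that the differentiated system $\sum_i \partial^k u_i=0$ ($k=1,\ldots,n-1$) has rank $n-1$ and Cramer's rule expresses each $u_j$ as a ratio of $(n-1)\times(n-1)$ minors. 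This implication is false. Take $F=\C(t)$, $u_1=u_2=t$, $u_3=-1-2t$, and $S=\{0,-1/2,\infty\}$: then $1+u_1+u_2+u_3=0$, one checks directly that no proper subsum vanishes, yet $u_1$ and $u_2$ are linearly dependent, every $(n-1)\times(n-1)$ minor $M_i$ vanishes, and your ratio $M_j/M_i$ is $0/0$. So the no-vanishing-subsum condition controls only sums with coefficients equal to $1$, not arbitrary $\C$-linear dependences, and the rank-$(n-1)$ claim on which the whole Cramer/logarithmic-Wronskian computation rests simply fails in general.

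This is exactly why the literature splits the result in two: Brownawell--Masser's Theorem~B carries out the Wronskian/pole-counting argument under a linear-independence hypothesis (there the nonvanishing of the Wronskian is legitimate), and the version stated here, with the weaker no-vanishing-subsum hypothesis, is then deduced by a separate reduction (Corollary~1 of \cite{BM86} and the argument reproduced in \cite{fz}), e.g.\ by passing to a maximal linearly independent subfamily of the $u_i$ together with $1$, expressing the remaining terms through it, and exploiting the subsum condition to see that each $u_i$ still gets caught by the resulting height bound. Your remaining ingredients (regularity of logarithmic derivatives outside $S$ enlarged by the support of the chosen differential, the bound $\nu(\partial u/u)\ge -1$ at good places, the use of $\deg(\omega)=2\g-2$ and the sum formula) are the standard and correct bookkeeping, but without the reduction step just described the proof does not go through, and adding it is not a cosmetic fix: it is where the hypothesis of the theorem is actually used.
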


Furthermore, we use the following classical estimates for the genus of a compositum of function fields, which are taken from \cite[p.~130, p.~132]{st}.
\begin{Th}[Castelnuovo's Inequality]\label{castelnuovo}
Let $F/\C$ be a function field of one variable of genus $\g$. Suppose there are given two subfields $F_1/\C$ and $F_2/\C$ of $F/\C$ satisfying
\begin{enumerate}
\item $F=F_1F_2$ is the compositum of $F_1$ and $F_2$.
\item $\left[F:F_i\right]=n_i$, and $F_i/\C$ has genus $\g_i$ \textup{(}$i=1,2$\textup{)}.
\end{enumerate}
Then we have
\[
\g\le n_1\g_1+n_2\g_2+(n_1-1)(n_2-1).
\]
\end{Th}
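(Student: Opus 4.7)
The plan is to prove Castelnuovo's inequality by applying Riemann--Roch on $F$ to a divisor built by pulling back from $F_1$ and $F_2$, in the spirit of the classical argument (cf. Stichtenoth). Let $C, C_1, C_2$ be the smooth projective curves over $\C$ with function fields $F, F_1, F_2$, and let $\phi_i \colon C \to C_i$ be the degree-$n_i$ covering induced by the inclusion $F_i \subseteq F$. First I would fix $\C$-rational places $Q_i$ on $C_i$ (which exist because $\C$ is algebraically closed) and set $D_i = \phi_i^* Q_i$, a divisor on $C$ of degree $n_i$.

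The central object is the $\C$-bilinear multiplication map
\[
\mu \colon L_{C_1}(r_1 Q_1) \otimes_\C L_{C_2}(r_2 Q_2) \longrightarrow L_C(r_1 D_1 + r_2 D_2),\qquad f \otimes h \mapsto (\phi_1^* f)(\phi_2^* h),
\]
for integers $r_1, r_2$ to be chosen. The natural choice is $r_1 = n_2 + g_1 - 1$ and $r_2 = n_1 + g_2 - 1$, so that (for a suitable choice of $Q_i$) Riemann--Roch on $C_i$ forces $\dim_\C L_{C_i}(r_i Q_i) = n_{3-i}$. The crucial step is to verify that $\mu$ is injective: any $\C$-linear relation $\sum_{i,j} c_{ij}(\phi_1^* f_i)(\phi_2^* h_j) = 0$ rewrites as an $F_1$-linear relation $\sum_j \bigl( \sum_i c_{ij}\, \phi_1^* f_i \bigr)\, \phi_2^* h_j = 0$ among the $\phi_2^* h_j \in F$; if the $h_j$ are chosen $F_1$-linearly independent, then each inner $\C$-sum vanishes, so all $c_{ij} = 0$.

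Such a choice is possible because $F = F_1 F_2$ with $[F : F_1] = n_1$ implies that $F$ admits an $F_1$-basis drawn from $F_2$; a standard movement-of-divisors argument then places $n_1$ such basis elements inside $L_{C_2}(r_2 Q_2)$ for a generic $Q_2$. Once injectivity is established, we get $\dim_\C L_C(r_1 D_1 + r_2 D_2) \geq n_1 n_2$, and combining with the Riemann--Roch upper bound on $C$,
\[
\dim_\C L_C(r_1 D_1 + r_2 D_2) \leq r_1 n_1 + r_2 n_2 - g + 1,
\]
and substituting the chosen $r_i$ yields $n_1 n_2 \leq (n_2 + g_1 - 1) n_1 + (n_1 + g_2 - 1) n_2 - g + 1$, which simplifies at once to $g \leq n_1 g_1 + n_2 g_2 + (n_1 - 1)(n_2 - 1)$.

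The main obstacle is the injectivity of $\mu$, which reduces to guaranteeing that $L_{C_2}(r_2 Q_2)$ contains an $F_1$-basis of $F$ (and symmetrically for $L_{C_1}(r_1 Q_1)$). This technical heart of the argument requires a careful choice of the places $Q_i$, exploiting the structure of the compositum $F = F_1 F_2$ and the fact that the $\C$-dimensions $n_{3-i}$ exactly match the $F_1$- and $F_2$-degrees; the remainder is routine Riemann--Roch bookkeeping.
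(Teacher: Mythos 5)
Note first that the paper itself offers no proof of this theorem: it is quoted from Stichtenoth \cite[p.~130, p.~132]{st}, so your sketch can only be measured against the classical argument it reproduces. Your setup (pulling back points $Q_i$, multiplying Riemann--Roch spaces, comparing dimensions) is indeed that classical strategy and the final bookkeeping is correct, but the two steps carrying the actual content are not proved. The first is the claimed ``Riemann--Roch upper bound'' $\dim_\C L_C(r_1D_1+r_2D_2)\le r_1n_1+r_2n_2-\g+1$. Riemann--Roch gives $\ell(A)=\deg A+1-\g+\ell(K-A)$ with $K$ a canonical divisor, hence only the \emph{lower} bound $\ell(A)\ge\deg A+1-\g$ in general; the reverse inequality needs $A$ nonspecial, which one normally gets from $\deg A\ge 2\g-1$ --- but $\g$ is exactly the unknown you are bounding, so this cannot be assumed and the step is circular as written. (It is repairable: fix $r_2=n_1+\g_2-1$, let $r_1=r\to\infty$; once $L_{C_2}(r_2Q_2)$ contains $n_1$ elements whose pullbacks are $F_1$-independent you still get $\ell(rD_1+r_2D_2)\ge n_1(r+1-\g_1)$, for large $r$ the divisor is automatically nonspecial, and the same arithmetic yields the stated bound. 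Your proposal does not do this.)

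The second and more serious gap is the injectivity of $\mu$, which you explicitly defer to ``a standard movement-of-divisors argument'' and a ``careful choice of the places $Q_i$''. Since $[F:F_1]=n_1$, at most $n_1$ elements of $F$ can be $F_1$-independent, so injectivity forces $\dim_\C L_{C_2}(r_2Q_2)=n_1$ exactly (true only for suitably chosen $Q_2$, e.g.\ non-Weierstrass points, using $r_2\ge\g_2$; so even your ``Riemann--Roch forces $\dim=n_{3-i}$'' needs an argument) \emph{and} that a basis of this space pulls back to an $F_1$-basis of $F$. The latter is a genuine general-position statement: it says the $n_1$ points of a generic fibre of $\phi_1$, viewed on $C_2$, are in general position for the linear system $|(n_1+\g_2-1)Q_2|$. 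Knowing only that $F=F_1F_2$ produces such an $F_1$-basis inside $L_{C_2}(sQ_2)$ for \emph{some} large $s$, a priori unrelated to $n_1+\g_2-1$, and plugging a larger $s$ into your computation destroys the constant and hence the theorem. Controlling this $s$ (or circumventing it, e.g.\ via the birational model of $C$ inside $C_1\times C_2$ and its arithmetic genus) is precisely where the work in Castelnuovo's inequality lies; as it stands, your proposal assumes the key point rather than proving it.
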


\begin{Th}[Riemann's Inequality]\label{riemann}
Suppose that $F=\C(x,y)$. Then we have the following estimate for the genus $\g$ of $F/\C$:
\[
\g\le([F:\C(x)]-1)([F:\C(y)]-1).
\]
\end{Th}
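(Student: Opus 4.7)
The plan is to deduce the bound on the genus from the Riemann--Roch theorem applied to a divisor supported at the poles of $x$ and $y$. Write $n_1=[F:\C(x)]$ and $n_2=[F:\C(y)]$, and let $P=(x)_\infty$ and $Q=(y)_\infty$ denote the pole divisors of $x$ and $y$ in $F$, so $\deg P=n_1$ and $\deg Q=n_2$. For non-negative integers $\alpha,\beta$ I would consider the effective divisor $D=\alpha P+\beta Q$ of degree $\alpha n_1+\beta n_2$.

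The first observation is that for any place $R$ of $F$ one has $\nu_R(x^iy^j)=i\nu_R(x)+j\nu_R(y)\ge-i\nu_R(P)-j\nu_R(Q)$, simply because $\nu_R(x)+\nu_R(P)\ge 0$ and $\nu_R(y)+\nu_R(Q)\ge 0$ by definition of the pole divisors. Consequently every monomial $x^iy^j$ with $0\le i\le\alpha$ and $0\le j\le\beta$ belongs to $\mathcal{L}(D)$.

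The second step is to show that these $(\alpha+1)(\beta+1)$ monomials are $\C$-linearly independent whenever $\beta\le n_1-1$. This uses the fact that $1,y,\dots,y^{n_1-1}$ is a $\C(x)$-basis of $F$: a non-trivial relation $\sum_{i,j}c_{ij}x^iy^j=0$ with $c_{ij}\in\C$ would force $\sum_i c_{ij}x^i=0$ in $\C(x)$ for each $j$, contradicting the algebraic independence of the powers of $x$. Hence $\ell(D)\ge(\alpha+1)(\beta+1)$.

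Finally, I would set $\beta=n_1-1$ and $\alpha=rn_2$ with $r$ large enough that $\deg D>2g-2$; then Riemann--Roch gives $\ell(D)=\deg D+1-g$, i.e.\ $\ell(D)=rn_1n_2+(n_1-1)n_2+1-g$. Comparing with the lower bound $\ell(D)\ge(rn_2+1)n_1=rn_1n_2+n_1$ yields
\[
rn_1n_2+n_1\;\le\;rn_1n_2+(n_1-1)n_2+1-g,
\]
which simplifies to $g\le(n_1-1)(n_2-1)$, the desired inequality. The only delicate point in the plan is the linear-independence argument of step two, which is what fixes the correct shape of the bound; everything else is essentially a direct application of the Riemann--Roch machinery, plus the elementary computation of the pole orders of the monomials $x^iy^j$.
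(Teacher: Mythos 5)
Your proof is correct: the monomials $x^iy^j$ ($0\le i\le\alpha$, $0\le j\le n_1-1$) do lie in $\mathcal{L}(\alpha P+(n_1-1)Q)$ and are $\C$-linearly independent, and comparing this lower bound for $\ell(D)$ with the exact Riemann--Roch value for $\deg D>2g-2$ gives exactly $\g\le(n_1-1)(n_2-1)$. The paper itself offers no proof, quoting the inequality from Stichtenoth's book, and your Riemann--Roch/monomial-counting argument is precisely the standard proof given there, so there is nothing to add beyond noting that the choice $\alpha=rn_2$ is an unnecessary (though harmless) normalization -- any sufficiently large $\alpha$ works.
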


We now prove three lemmas that we will need in the proofs of our main results.

\begin{Le}\label{extdeg}
Let $h\in \C[x]$ be indecomposable  and let  $y\neq x$ be a root of $h(X)-h(x)\in\C(x)[X]$. If $h$ is neither cyclic nor dihedral, then \[[\C(x,y):\C(x)]\ge\frac{1}{2}\deg h.\]
\end{Le}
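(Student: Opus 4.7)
The plan is to translate the degree $[\C(x,y):\C(x)]$ into the length of a permutation-group orbit, and then use the classification of primitive permutation groups with a cyclic transitive subgroup. Set $n=\deg h$, let $L$ be the splitting field of $h(X)-h(x)$ over $\C(h(x))$, $G:=\Mon(h)=\Gal(L/\C(h(x)))$ and $H:=\Gal(L/\C(x))$. Then $G$ acts faithfully and transitively on the root set $\Omega$ of $h(X)-h(x)$ (of size $n$), $H$ is the point-stabilizer of $x$, and by orbit-stabilizer $[\C(x,y):\C(x)] = |H\cdot y|$, so it suffices to show that every $H$-orbit on $\Omega\setminus\{x\}$ has size at least $n/2$.

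Because $h$ is indecomposable, the L\"uroth correspondence recalled in Section~\ref{sec2} implies that $H$ is maximal in $G$, so the action on $\Omega$ is primitive; and as also recalled in Section~\ref{sec2}, the inertia group at a place of $L$ lying above $\infty$ is a cyclic subgroup of $G$ that acts regularly on $\Omega$. I would then invoke the classical theorems of Burnside (for $n$ prime) and Schur (for $n$ composite): a primitive permutation group of degree $n$ containing a regular cyclic subgroup is either $2$-transitive, or $n=p$ is prime and $G$ embeds in $\mathrm{AGL}(1,p)$. In the $2$-transitive case, $H$ is transitive on $\Omega\setminus\{x\}$, so $|H\cdot y|=n-1\geq n/2$, and we are done.

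It remains to rule out the affine case under the hypotheses. Here $G=T\rtimes K$ with $T\cong\ZZ/p\ZZ$ (the translations) and $K\leq\FF_p^\times$ cyclic, and the inertia at $\infty$ must equal $T$, since $T$ is the unique transitive cyclic subgroup. Apply Riemann--Hurwitz to the degree-$p$ polynomial cover $h:\PP^1\to\PP^1$: total ramification at $\infty$ contributes $p-1$, so the finite-place inertias contribute the remaining $p-1$. Every cyclic subgroup of $G$ is either $T$ itself or lies in a conjugate of $K$; solving $\sum(1-1/e_i)=1$ with each $e_i\geq 2$, and using that the inertias together generate $G$, exactly two configurations are possible: one further totally ramified finite place, forcing $G=T$ and hence $h$ cyclic by Proposition~\ref{cydih}; or two finite ramified places each with inertia of order $2$, which together with $T$ generate the dihedral group of order $2p$, forcing $h$ dihedral. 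Both are excluded by hypothesis.

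The main obstacle is the bookkeeping in this last step inside $\mathrm{AGL}(1,p)$, namely identifying the cyclic subgroups of $G$ correctly and exploiting that the inertia groups must jointly generate $G$ (connectedness of the cover). An alternative that bypasses this step is to cite Fried's theorem asserting that $(h(X)-h(x))/(X-x)$ is irreducible over $\C(x)$ whenever $h$ is indecomposable and neither cyclic nor dihedral, which immediately yields the stronger bound $[\C(x,y):\C(x)]=n-1$.
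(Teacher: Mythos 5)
Your argument is correct, but it follows a genuinely different route from the paper. The paper proves the lemma by quoting Fried's theorem (in Turnwald's exposition) that $(h(X)-h(Y))/(X-Y)$ is irreducible once $h$ is indecomposable and neither cyclic nor dihedral, combined with a highest-homogeneous-part argument showing that any irreducible $H_1\in\C[X,Y]$ with $H_1(x,y)=0$ satisfies $\deg_Y H_1=[\C(x,y):\C(x)]$; this gives the exact value $[\C(x,y):\C(x)]=\deg h-1$ (the case $\deg h=2$ being trivial). Your closing remark is precisely this route. Your main argument instead works inside $\Mon(h)$: indecomposability gives maximality of the point stabilizer, hence primitivity; the inertia at infinity gives a regular cyclic subgroup; Burnside--Schur then forces either double transitivity (where the $H$-orbit of $y$ has length $n-1\ge n/2$, since $[\C(x,y):\C(x)]$ equals that orbit length in the Galois closure) or the affine case of prime degree, which your Riemann--Hurwitz analysis in $\mathrm{AGL}(1,p)$ reduces to $G=T$ (cyclic $h$) or $G$ dihedral of order $2p$ (dihedral $h$), both excluded by Proposition~\ref{cydih}. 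Since irreducibility of $(h(X)-h(Y))/(X-Y)$ is equivalent to double transitivity of $\Mon(h)$, you are in effect reproving the relevant case of Fried's theorem; what this buys is a self-contained argument resting only on classical group theory (and it in fact yields the same sharper conclusion $n-1$, so the factor $\tfrac12$ is not needed), whereas the paper's citation-based proof is shorter. One small bookkeeping caveat: your equation $\sum(1-1/e_i)=1$ only accounts for finite inertia generators fixing a point (a translation has index $p-1$, not $(p-1)(1-1/e)$), but since you treat the totally ramified finite place as a separate configuration, the enumeration you arrive at is the correct one.
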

\begin{proof}
We set $d=[\C(x,y):\C(x)]$. Then $d$ is the degree of a minimal polynomial $\tilde{H}(Y)\in \C(x)[Y]$ of $y$ over $\C(x)$. Let $H(X,Y)=(h(X)-h(Y))/(X-Y)\in \C[X,Y]$. Then $H(x,Y)\in \C(x)[Y]$ is a polynomial in $Y$ for which $H(x,y)=0$ holds. It follows that $\tilde{H}(Y)$ divides $H(x,Y)$.

If $H_1(X,Y)\in \C[X,Y]$ is any irreducible polynomial such that $H_1(x,y)=0$, then $H_1(X,Y)|H(X,Y)$. Then the highest homogeneous part of $H_1(X,Y)$ divides the highest homogeneous part of $H(X,Y)$, which is a constant multiple of
\[
\frac{X^{\deg h}-Y^{\deg h}}{X-Y}=X^{\deg h-1}+X^{\deg h-2}Y+\cdots+XY^{\deg h-2}+Y^{\deg h-1}.
\]
Therefore, it follows $\deg H_1=\deg_X H_1=\deg_YH_1=d$. This argument can be found in the proof of \cite[Lemma\@~3]{Z07}.

Since $h$ is neither cyclic nor dihedral, if $\deg h\ge 3$, according to Fried~\cite{F70}  it follows that $H(X,Y)=(h(X)-h(Y))/(X-Y)\in \C[X,Y]$ is irreducible. (See also Turnwald's paper~\cite[Thm.~4.5]{T95} for a detailed exposition of Fried's proof.) Then $H$ is a constant multiple of $H_1$ and we conclude
\[
\deg h-1=\deg H=\deg H_1=\deg_{Y}H_1=d.
\]
Thus, $[\C(x,y):\C(x)]=\deg h-1\ge\deg h/2$. If $\deg h=2$, we clearly have $[\C(x,y):\C(x)]\ge 1=\deg h/2$.
\end{proof}

\begin{Le}\label{either}
Let $h\in \C[x]$ be indecomposable and let  $y\neq x$ be a root of $h(X)-h(x)\in\C(x)[X]$. Then either $\C(x)\cap\C(y)=\C(x)$ and $h$ is cyclic or $\C(x)\cap\C(y)=\C(h(x))$.
\end{Le}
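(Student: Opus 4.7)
The plan is to analyze $K := \C(x) \cap \C(y)$ directly, via L\"uroth's theorem and the indecomposability of $h$. Since $h(x) = h(y)$, we have $\C(h(x)) \subseteq K \subseteq \C(x)$. By L\"uroth, $K = \C(u)$ for some $u \in \C(x)$, and because $h$ is a polynomial we may (as used earlier in Section~\ref{sec2} via \cite[p.~16]{S00}) choose $u = p(x)$ with $p \in \C[x]$; then $h(x) = r(p(x))$ for some $r \in \C[x]$. Indecomposability of $h$ leaves two cases: either $\deg r = 1$, in which case $K = \C(p(x)) = \C(h(x))$ (the second alternative of the lemma), or $\deg p = 1$, in which case $K = \C(x)$.

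In the second case, $K = \C(x)$ means $\C(x) \subseteq \C(y)$. Running the same L\"uroth argument with $x$ and $y$ interchanged (viewing $x$ as a root of $h(X) - h(y) \in \C(y)[X]$) shows that $K$ must equal either $\C(y)$ or $\C(h(x))$; since $[\C(x):\C(h(x))] \geq 2$ and $K = \C(x)$, the option $\C(h(x))$ is excluded, and we conclude $\C(x) = \C(y)$. Hence $y = \phi(x)$ for some M\"obius transformation $\phi \neq \mathrm{id}$ satisfying $h \circ \phi = h$.

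It remains to deduce that $h$ is cyclic. Because $h \in \C[x]$ has a unique pole (at $\infty$) and $h \circ \phi = h$, we must have $\phi(\infty) = \infty$, so $\phi$ is affine, $\phi(x) = ax + b$. The orbit $\{\phi^k(x)\}_{k \in \NN}$ is contained in the finite set of roots of $h(X) - h(x) \in \C(x)[X]$, so $\phi$ has finite order $n' \geq 2$. A non-identity affine map of finite order has a unique fixed point $c \in \C$, and after conjugating by the linear polynomial $\ell(x) = x - c$ one finds $\ell \circ \phi \circ \ell^{-1}(t) = \zeta t$ for some primitive $n'$-th root of unity $\zeta$. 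Writing $\tilde h(t) := h(\ell^{-1}(t))$, the relation $h \circ \phi = h$ becomes $\tilde h(\zeta t) = \tilde h(t)$, which forces $\tilde h(t) \in \C[t^{n'}]$, say $\tilde h(t) = s(t^{n'})$. Hence $h(x) = s((x-c)^{n'})$, and indecomposability of $h$ forces $\deg s = 1$. Thus $h$ is equivalent to $x^{n'}$, i.e., cyclic.

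The main technical step is the initial reduction from a rational L\"uroth generator to a polynomial one: this is where we really use that $h$ is a polynomial, via the fact that the infinite place of $\C(h(x))$ is totally ramified in $\C(x)$ and thus has a unique extension to any intermediate field, which can be moved to infinity by a M\"obius change of variable. Once this reduction is in hand, the rest is a clean dichotomy from indecomposability followed by an elementary analysis of finite subgroups of the affine group.
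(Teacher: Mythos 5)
Your proof is correct, and its first half (L\"uroth plus a polynomial generator via \cite[p.~16]{S00}, then the indecomposability dichotomy $\deg r=1$ or $\deg p=1$) coincides with the paper's argument. Where you genuinely diverge is in the case $\C(x)\cap\C(y)=\C(x)$: the paper writes $x=\nu(y)$ with $\nu$ a linear polynomial in $\Aut(h)$ and then invokes the Galois-theoretic machinery of Section~\ref{sec2} --- $\Aut(h)\cong N_G(H)/H$ with $G=\Mon(h)$, $H$ the stabilizer of $x$; indecomposability forces $N_G(H)\in\{H,G\}$, nontriviality of $\Aut(h)$ forces $H\unlhd G$, core-freeness gives $H=1$, so $G=I$ is cyclic, and Proposition~\ref{cydih} converts this back into ``$h$ is cyclic.'' You instead upgrade $\C(x)\subseteq\C(y)$ to $\C(x)=\C(y)$ (by running L\"uroth symmetrically --- one could get this even faster from $[\C(x):\C(h(x))]=[\C(y):\C(h(y))]=\deg h$), write $y=\phi(x)$ with $\phi$ affine of finite order $n'\ge 2$ (pole at infinity forces affineness, the finite root set of $h(X)-h(x)$ forces finite order), conjugate $\phi$ to $t\mapsto\zeta t$ at its fixed point, and read off $h(x)=s((x-c)^{n'})$, whence $\deg s=1$ by indecomposability and $h$ is equivalent to $x^{n'}$. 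This is a self-contained, elementary replacement for the monodromy argument: it avoids Proposition~\ref{cydih} and the normalizer computation entirely, at the cost of an explicit (but easy) classification of finite-order affine maps; the paper's route is shorter on the page only because it outsources the work to the cited results of M\"uller--Zieve and Kreso--Zieve, and it generalizes more readily to settings where one cannot normalize the symmetry so explicitly.
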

\begin{proof}
By assumption, $h(x)=h(y)$.
Note that thus $\C(h(x))\subseteq \C(x)\cap \C(y)\subseteq \C(x)$.
By L\"uroth's theorem (see \cite[p.\@~13]{S00}) it follows that $\C(x)\cap \C(y)=\C(r(x))$ for some $r\in \C(x)$. Moreover, since $h$ is a polynomial, $r$ can be chosen to be a polynomial as well by \cite[p.~16]{S00}. Assume henceforth $r\in \C[x]$. Then $h(x)\in \C[r(x)]$. Since $h$ is indecomposable, it follows that either $\deg r=\deg h$ or $\deg r=1$, i.e.\@ that either $\C(x)\cap \C(y)=\C(h(x))$ or $\C(x)\cap \C(y)=\C(x)$. Note that if $\C(x)\cap \C(y)=\C(x)$, then $\nu(y)=x$ for some $\nu\in \C(x)$. Furthermore, clearly $h(\nu(y))=h(x)=h(y)$. We deduce that $\nu\in \C[x]$.

Let $\Aut(h)$ denote the group of linear polynomials $\ell\in \C[x]$ such that $h\circ \ell=h$.
It follows that $\nu\in \Aut(h)$ and since $\nu(y)=x \neq y$, it follows that $\Aut(h)$ is a non-trivial group. Recall that $h$ is by assumption indecomposable. We now show that $\Mon(h)$ is cyclic, and hence that $h$ is cyclic. This has been shown in Remark 2.14 in \cite{ZM}, as well as in Corollary 6.6 in \cite{KZ14}. For the sake of completeness we recall the proof.

First recall from Section~\ref{sec2} that if $L$ is the splitting field of  $h(X)-t$ over $\C(t)$ and $x$ is such that $h(x)=t$, then $G:=\Mon(h)=\Gal(L/\C(h(x)))$, and if we set $H=\Gal(L/\C(x))$, then $G=HI$ for some cyclic group $I$. Now note that $\Aut(h)\cong N_G(H)/H$. Since $h$ is indecomposable, there are no intermediate fields between $\C(h(x))$ and $\C(x)$, and thus no proper subgroups between $H$ and $G$, so either $N_G(H) = G$ or $N_G(H) = H$. In the latter case, $\Aut(h)$ is trivial, a contradiction. Thus $H\unlhd G$. Since $H$ contains no nontrivial normal
subgroups of $G$ (because $L$ is the normal closure of $\C(x)/\C(h(x))$), we must
have $H = 1$, and $G=HI=I$, so $G$ is cyclic. By Proposition~\ref{cydih} it follows that $h$ is cyclic.
\end{proof}

\begin{Le}\label{ours}
Let $h\in \C[x]$ be indecomposable and let  $y\neq x$ be a root of \mbox{$h(X)-h(x)$}$\in\C(x)[X]$. Then the following hold.
\begin{enumerate}
\item For $q\in \C[h(x)]$ we have $q(x)=q(y)$. Furthermore, if $h$ is not cyclic and $q(x)=q(y)$ for some $q\in \C[x]$, then $q\in \C[h(x)]$.
\item Let $d:=\left[\C(x,y):\C(x)\right]$. Then $d\le\deg h-1$.
\item The genus of the function field $\C(x,y)$ \textup{(}over $\C$\textup{)} is not greater than $(d-1)(d-2)/2$.
\end{enumerate}
\end{Le}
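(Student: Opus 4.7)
For part (1), the forward direction is immediate: if $q = f(h(x))$ for some $f \in \C[T]$, then $q(y) = f(h(y)) = f(h(x)) = q(x)$. For the converse, assuming $h$ is not cyclic, the hypothesis $q(x) = q(y)$ places $q(x)$ in $\C(x) \cap \C(y)$, and by Lemma~\ref{either} this intersection equals $\C(h(x))$. Writing $q(x) = p(h(x))/s(h(x))$ with coprime $p, s \in \C[T]$, I would rule out a nonconstant $s$ by a pole-counting argument: any finite root $\alpha$ of $s$ would force $s(h(x))$ to vanish at the preimages $h^{-1}(\alpha) \subset \C$, producing finite poles of $q$, contradicting $q \in \C[x]$. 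Hence $s$ is constant and $q \in \C[h(x)]$.

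For part (2), the key observation is that $Y - x$ divides $h(Y) - h(x)$ in $\C(x)[Y]$ and $y \neq x$, so $y$ is in fact a root of the quotient $(h(Y) - h(x))/(Y - x) \in \C(x)[Y]$, a polynomial of degree $\deg h - 1$. This yields $d \le \deg h - 1$ at once.

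Part (3) is the most delicate. I would first produce an irreducible $H_1 \in \C[X, Y]$ with $H_1(x, y) = 0$ of total degree exactly $d$. Since $H(x, y) = 0$ for $H(X, Y) := (h(X) - h(Y))/(X - Y)$, Gauss's lemma gives $H_1 \mid H$ in $\C[X, Y]$. The highest homogeneous part of $H$ factors as $a \prod_{k=1}^{n-1}(X - \zeta_n^k Y)$ with $n = \deg h$, so the top of $H_1$ must be a product over some subset $S$ of these linear forms, which forces $\deg_X H_1 = \deg_Y H_1 = \deg H_1 = |S|$. Since $H_1$ is (up to content) the minimal polynomial of $y$ over $\C(x)$, one has $\deg_Y H_1 = d$, so $\deg H_1 = d$. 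With $H_1$ of total degree $d$ in hand, I would invoke the classical genus-degree formula for plane curves: the geometric genus of the projective plane curve defined by an irreducible polynomial of total degree $d$ is bounded above by the arithmetic genus $(d-1)(d-2)/2$, giving the desired bound on the genus of $\C(x,y)$. The main obstacle lies precisely here, since Riemann's inequality (Theorem~\ref{riemann}) would only deliver the weaker $(d-1)^2$; obtaining the sharper $(d-1)(d-2)/2$ requires the plane-curve genus bound together with the preliminary control over the total degree of the defining polynomial.
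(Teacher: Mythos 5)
Your proposal is correct and follows essentially the same route as the paper: part (1) via Lemma~\ref{either} plus the integrality of $q(x)$ over $\C[h(x)]$ (your pole-counting is just an explicit version of the paper's one-line appeal to $\C(h(x))\cap\C[x]=\C[h(x)]$), part (2) from $\deg_Y (h(X)-h(Y))/(X-Y)=\deg h-1$, and part (3) by showing the irreducible plane model $H_1$ has total degree $d$ through the highest-homogeneous-part divisibility argument and then applying the plane-curve genus bound $(d-1)(d-2)/2$. No gaps to report.
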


Zannier~\cite[Lemma~3]{Z07} showed that for an arbitrary $h\in \C[x]$ with $\deg h\ge1$, there exists a conjugate $y$ of $x$ over $\C(h(x))$ with the above  properties:  (1) then states that for $q\in \C[x]$, we have $q(x)=q(y)$ if and only if $q\in \C[h(x)]$, while (2) and (3) are the same as above. Note that in Lemma~\ref{ours}, we put some conditions on $h$, but $y$ is an arbitrary conjugate of $x$ (such that $y\neq x$).

\begin{proof}[Proof of Lemma~\ref{ours}] The first statement follows from $h(x)=h(y)$.
Assume now that $h$ is not cyclic and that $q(x)=q(y)$ for some $q\in \C[x]$. By Lemma~\ref{either} it follows that  $\C(x)\cap\C(y)=\C(h(x))$. Since $q(x)=q(y)$, it follows that $q(x)\in \C(x)\cap\C(y)=\C(h(x))$. Furthermore, since $h, q\in \C[x]$, we have $q(x)\in \C[h(x)]$. This completes the proof of (1). We prove the other two statements completely analogously to the proof of Lemma~3 from \cite{Z07}. By setting $H(X, Y):=(h(X)-h(Y)/(X-Y)$ we have $H(x, y)=0$. Then (2) follows from $\deg_Y H\leq \deg H=\deg h-1$. If $H_1(X,Y)\in \C[X,Y]$ is any irreducible polynomial such that $H_1(x,y)=0$, then one shows by the same argument as in the proof of Lemma~\ref{extdeg} that $\deg H_1=\deg_X H_1=\deg_YH_1=d$. Then (3) is a consequence of the fact that the genus of a plane curve of degree $\leq d$ is bounded by $(d-1)(d-2)/2$.
\end{proof}


\section{Proof of Theorem \ref{main}}\label{sec5}

In this section we prove Theorem \ref{main} using results from the previous two sections. Recall that $A_0, A_1, G_0, G_1\in \C[x]$ and $(G_n)_{n=0}^\infty$ is a sequence of polynomials defined by the minimal non-degenerate simple linear recurrence
$$G_{n+2}(x)=A_{1}(x)G_{n+1}(x)+A_0(x)G_n(x), \quad n\in\N.$$
We are assuming that for some $n$ we have $G_n=g\circ h$, where $h$ is indecomposable, and that $x$ and $y$, which define equation~\ref{sum}, are such that $h(x)=h(y)$ and $x\neq y$. We will use this notation throughout this section. In this notation, we have the following characterization of the existence of a proper vanishing subsum of \eqref{sum} in the case when $\C(x)\cap \C(y)=\C(h(x))$. Note that by Lemma~\ref{either}, either this holds or $h$ is cyclic.

\begin{Le}\label{lemma}
If $\C(x)\cap \C(y)=\C(h(x))$, then there exists a proper vanishing subsum of \eqref{sum} if and only if $\pi_1\pi_2A_0(x)^n\in \C(h(x))$.
\end{Le}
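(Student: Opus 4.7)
The plan is to turn the existence of a proper vanishing subsum of \eqref{sum} into the single identity $\pi_1\pi_2\,A_0(x)^n=\rho_1\rho_2\,A_0(y)^n$, and then exploit the hypothesis $\C(x)\cap\C(y)=\C(h(x))$ together with the fact that $\pi_1\pi_2$ depends rationally on $x$ alone.

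Write $u_i=\pi_i\alpha_i^n$ and $v_i=\rho_i\beta_i^n$; all four are nonzero by the minimality of the recurrence, and
\[
u_1+u_2=G_n(x)=g(h(x))=g(h(y))=G_n(y)=v_1+v_2,
\]
a nonzero polynomial since $\deg G_n=\deg g\cdot\deg h>1$. First I would enumerate the proper vanishing subsums of $u_1+u_2-v_1-v_2=0$: one-term subsums are excluded because every term is nonzero; the ``same-side'' two-term subsums $u_1+u_2$ and $-v_1-v_2$ are excluded because both equal $\pm G_n(x)\neq 0$; and three-term subsums fail because their complements are single nonzero terms. Thus any proper vanishing subsum has the form $u_i=v_j$, and once one such cross-side pairing holds, the complementary pairing $u_{3-i}=v_{3-j}$ follows from \eqref{sum}. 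Hence a proper vanishing subsum exists if and only if $\{u_1,u_2\}=\{v_1,v_2\}$ as multisets, which, in view of the already established equality of sums, is equivalent to the equality of products $u_1u_2=v_1v_2$. Using $\alpha_1\alpha_2=-A_0(x)$ and $\beta_1\beta_2=-A_0(y)$, this last equality rewrites as
\[
\pi_1\pi_2\,A_0(x)^n=\rho_1\rho_2\,A_0(y)^n.
\]

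The final step is field-theoretic. Since $\pi_1\pi_2$ is symmetric in $\alpha_1,\alpha_2$, solving the linear system $\pi_1+\pi_2=G_0(x)$, $\pi_1\alpha_1+\pi_2\alpha_2=G_1(x)$ and substituting $\alpha_1+\alpha_2=A_1(x)$, $\alpha_1\alpha_2=-A_0(x)$ yields an explicit expression showing $\pi_1\pi_2\in\C(x)$; the same formula with $y$ in place of $x$ gives $\rho_1\rho_2\in\C(y)$. For the forward direction, the displayed product identity forces its common value into $\C(x)\cap\C(y)=\C(h(x))$. For the converse, any automorphism $\sigma$ of the splitting field of $h(X)-h(x)$ over $\C(h(x))$ sending $x$ to $y$ (such $\sigma$ exists because $y$ is a $\C(h(x))$-conjugate of $x$) fixes $\C(h(x))$ pointwise and carries $\pi_1\pi_2\,A_0(x)^n$ to $\rho_1\rho_2\,A_0(y)^n$, since both arise from the same rational expression evaluated at $x$ and at $y$; hence the hypothesis $\pi_1\pi_2\,A_0(x)^n\in\C(h(x))$ yields the product identity and, by the previous paragraph, a proper vanishing subsum. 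The only mild subtlety is the case enumeration in the second paragraph, which relies entirely on the nonvanishing of each $u_i$, $v_j$, and of $G_n$; there is no deeper obstacle.
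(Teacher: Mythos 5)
Your proof is correct and takes essentially the same route as the paper: you reduce a proper vanishing subsum to the cross-pairing $\pi_i\alpha_i^n=\rho_{\sigma(i)}\beta_{\sigma(i)}^n$, translate this into $\pi_1\pi_2A_0(x)^n=\rho_1\rho_2A_0(y)^n$ via Vieta, and then use $\C(x)\cap\C(y)=\C(h(x))$ together with the conjugation $x\mapsto y$ (and the explicit formula showing $\pi_1\pi_2\in\C(x)$) for both implications. The only cosmetic difference is that you recover the pairing from equal sums and equal products via the common monic quadratic, where the paper uses the discriminant identity $G_n(x)^2-4\pi_1\pi_2(-A_0(x))^n=G_n(y)^2-4\rho_1\rho_2(-A_0(y))^n$; these are the same computation.
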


Note that we have $\alpha_1+\alpha_2=A_1(x)$ and $\alpha_1\alpha_2=-A_0(x)$ by Vieta's formulae. Clearly, $G_0(x)=\pi_1+\pi_2$ and $G_1(x)=\pi_1\alpha_1+\pi_2\alpha_2$. Then
\begin{equation}\label{pis}
\pi_1=\frac{G_1(x)-\alpha_2G_0(x)}{\alpha_1-\alpha_2}, \quad \pi_2=-\frac{G_1(x)-\alpha_1G_0(x)}{\alpha_1-\alpha_2},
\end{equation}
and hence
\begin{equation}\label{productpi}
\pi_1\pi_2=-\frac{G_1(x)^2-G_0(x)G_1(x)A_1(x)-A_0(x)G_0(x)^2}{A_1(x)^2+4A_0(x)}\in \C(x).
\end{equation}
Analogously,
\begin{equation}\label{productrho}
\rho_1\rho_2=-\frac{G_1(y)^2-G_0(y)G_1(y)A_1(y)-A_0(y)G_0(y)^2}{A_1(y)^2+4A_0(y)}\in \C(y).
\end{equation}

\begin{proof}[Proof of Lemma~\ref{lemma}]
There exists a proper vanishing subsum of \eqref{sum} if and only if there exists a permutation $\sigma$ of the set $\{1, 2\}$ such that
\begin{equation}\label{perm}
\pi_i\alpha_i^n=\rho_{\sigma(i)}\beta_{\sigma(i)}^n
\end{equation}
 for $i=1, 2$. If there exists such a permutation, then in particular we have $\pi_1\pi_2 A_0(x)^n=\rho_1\rho_2A_0(y)^n$,
by Vieta's formulae.
Since $\pi_1\pi_2\in \C(x)$ and $A_0(x)\in \C(x)$ we have that $\pi_1\pi_2A_0(x)^n\in \C(x)$. Analogously $\rho_1\rho_2A_0(y)^n\in \C(y)$, so
$\pi_1\pi_2A_0(x)^n\in \C(x)\cap \C(y)=\C(h(x))$.

Assume now that $\pi_1\pi_2A_0(x)^n\in \C(h(x))$, so that $\pi_1\pi_2A_0(x)^n=p(h(x))$ for some $p\in \C(x)$. Then analogously $\rho_1\rho_2A_0(y)^n=p(h(y))$ and since $h(x)=h(y)$ we get $\pi_1\pi_2A_0(x)^n=\rho_1\rho_2A_0(y)^n$.
Since $G_n(x)=G_n(y)$ it follows that
\[
G_n(x)^2-4\pi_1\pi_2(-A_0(x))^n=G_n(y)^2-4\rho_1\rho_2(-A_0(y))^n,
\]
and hence
\[
\pi_1\alpha_1^n-\pi_2\alpha_2^n=\pm(\rho_1\beta_1^n-\rho_2\beta_2^n).
\]
Thus, there exists a proper vanishing subsum of \eqref{sum}.
\end{proof}

Note that by Lemma~\ref{either} and Lemma~\ref{lemma} it follows that if $A_0(x)=a_0\in \C$ and
\[
\pi_1\pi_2=-\frac{G_1(x)^2-G_0(x)G_1(x)A_1(x)-A_0(x)G_0(x)^2}{A_1(x)^2+4A_0(x)}=\pi\in \C,
\]
then either $h$ is cyclic or there exists a proper vanishing subsum of \eqref{sum}. On the other hand, we have the following.

\begin{Le}\label{lem0}
If $\pi_1\pi_2=\pi A_0(x)^m$ for some $m\geq 0$, $\pi\in \C$ and $\deg A_0=1$, then either $h$ is cyclic or there does not exist a proper vanishing subsum of \eqref{sum}.
\end{Le}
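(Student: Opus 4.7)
The plan is to combine Lemma~\ref{either} with Lemma~\ref{lemma} in order to convert the existence of a proper vanishing subsum into a polynomial identity of the shape $A_0(x)^{n+m}=P(h(x))$, and then to exploit $\deg A_0=1$ to force $h$ to be cyclic.

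More precisely, I would argue as follows. If $h$ is not cyclic, then by Lemma~\ref{either} we have $\C(x)\cap\C(y)=\C(h(x))$, so Lemma~\ref{lemma} applies and the existence of a proper vanishing subsum of \eqref{sum} is equivalent to $\pi_1\pi_2 A_0(x)^n\in \C(h(x))$. Substituting the hypothesis $\pi_1\pi_2=\pi A_0(x)^m$, and noting that minimality forces $\pi_1,\pi_2\neq 0$ and hence $\pi\neq 0$, we deduce $A_0(x)^{n+m}\in\C(h(x))$. A routine pole-comparison argument then upgrades this to $A_0(x)^{n+m}=P(h(x))$ for some $P\in\C[x]$: writing $A_0(x)^{n+m}=p(h(x))/q(h(x))$ with coprime $p,q\in\C[x]$, any root $\alpha\in\C$ of $q$ would, at any $\xi\in h^{-1}(\alpha)$, make the right-hand side have a pole whereas the polynomial left-hand side does not. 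Comparing degrees yields $n+m=\deg P\cdot\deg h$, and in particular $\deg h\mid n+m$.

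Now I would use that $A_0$ is linear: writing $A_0(x)=a(x-x_0)$ with $a\in \C^*$, the polynomial $A_0(x)^{n+m}$ has $x_0$ as its unique root, of multiplicity $n+m$. Hence for every root $\gamma\in\C$ of $P$ the polynomial $h(x)-\gamma$, of degree $\deg h$, must have only $x_0$ as a root, so $h(x)-\gamma=\lambda(x-x_0)^{\deg h}$ for some $\lambda\in\C^*$. In the nontrivial case $n+m\geq 1$ one has $\deg P\geq 1$, so such a $\gamma$ exists, and we read off that $h=\ell_1\circ x^{\deg h}\circ \ell_2$ for suitable linear $\ell_1,\ell_2\in\C[x]$; that is, $h$ is equivalent to $x^{\deg h}$, hence cyclic. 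This contradicts the standing assumption and proves that no proper vanishing subsum can exist. The main point that needs a little care is the passage from $A_0(x)^{n+m}\in\C(h(x))$ to $A_0(x)^{n+m}\in\C[h(x)]$ and the ensuing single-root analysis; neither step is deep, but both are what make the degree-one hypothesis on $A_0$ bite.
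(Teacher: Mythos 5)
Your argument is correct, and its first half coincides with the paper's: assume $h$ is not cyclic, invoke Lemma~\ref{either} to get $\C(x)\cap\C(y)=\C(h(x))$, apply Lemma~\ref{lemma}, use $\pi\neq 0$ (minimality) to pass from $\pi_1\pi_2A_0(x)^n\in\C(h(x))$ to $A_0(x)^{m+n}\in\C(h(x))$, and then upgrade to $A_0(x)^{m+n}\in\C[h(x)]$ by integrality (your pole-comparison argument is exactly the standard justification the paper leaves implicit). Where you diverge is the endgame. The paper stays inside its conjugation machinery: by Lemma~\ref{ours}(1), $A_0(x)^{m+n}=A_0(y)^{m+n}$, hence $A_0(x)=\zeta A_0(y)$ for a root of unity $\zeta$, so $A_0(x)\in\C(x)\cap\C(y)=\C(h(x))$, and then $\deg A_0=1<\deg h$ gives the contradiction. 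You instead analyze roots of the identity $A_0(x)^{m+n}=P(h(x))$ directly: since $A_0$ is linear, every factor $h(x)-\gamma$ with $P(\gamma)=0$ must be $\lambda(x-x_0)^{\deg h}$, forcing $h$ to be equivalent to $x^{\deg h}$, i.e.\ cyclic, contradicting the hypothesis. Your route is self-contained (it never uses Lemma~\ref{ours} or the conjugate $y$ after the reduction) and produces the stronger structural conclusion that $h$ itself would be cyclic; the paper's route is shorter and reuses lemmas already in place. One small shared caveat: both arguments implicitly need $m+n\geq 1$ (you flag this as the ``nontrivial case''; the paper's $(m+n)$-th root of unity step likewise degenerates when $m+n=0$), so this is not a gap relative to the paper's own proof.
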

\begin{proof}
By $\pi_1\pi_2=\pi A_0(x)^m$ and by Lemma~\ref{either} and Lemma~\ref{lemma}, it follows that if there exists a proper vanishing subsum of \eqref{sum}, then either $h$ is cyclic or $A_0(x)^{m+n}\in \C[h(x)]$. Assuming the latter, by Lemma~\ref{ours} we have $A_0(x)=\zeta A_0(y)$ for some $(m+n)$-th root of unity $\zeta$. Then $A_0(x)\in \C(x)\cap \C(y)=\C(h(x))$. Since $\deg A_0=1$ and $\deg h\geq 2$, we have a contradiction.
\end{proof}

In Theorem~\ref{main} we are assuming that we do not have $G_m(x)\in\C[h(x)]$ for all $m\in\N$. We have the following characterization of this situation.

\begin{Le}\label{newl}
We have that $G_m(x)\in\C[h(x)]$ for all $m\in\N$  if and only if $G_0,G_1,A_0,A_1\in \C[h(x)]$.
\end{Le}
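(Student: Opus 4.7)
The plan is to handle the two implications separately, with the reverse direction requiring the bulk of the work.

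The direction $(\Leftarrow)$ is immediate: if $G_0, G_1, A_0, A_1 \in \C[h(x)]$, then since $\C[h(x)]$ is a subring of $\C[x]$, an induction on $m$ based on the recurrence relation $G_{m+2} = A_1 G_{m+1} + A_0 G_m$ forces $G_m \in \C[h(x)]$ for every $m \in \N$.

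For the direction $(\Rightarrow)$, assume $G_m \in \C[h(x)]$ for all $m$. Then $G_0, G_1 \in \C[h(x)]$ immediately, so only $A_0, A_1$ demand attention. The strategy is to recover $A_0, A_1$ from $G_0, G_1, G_2, G_3$. Writing the recurrence at $m = 0$ and $m = 1$ produces a linear system in the unknowns $A_1, A_0$ with coefficient matrix
\[
\begin{pmatrix} G_1 & G_0 \\ G_2 & G_1 \end{pmatrix}.
\]
Using the Binet representation $G_m = \pi_1 \alpha_1^m + \pi_2 \alpha_2^m$ together with Vieta's formulae, a short computation gives
\[
G_1^2 - G_0 G_2 = -\pi_1 \pi_2 (\alpha_1 - \alpha_2)^2 = -\pi_1 \pi_2 (A_1^2 + 4 A_0),
\]
which is nonzero because $\pi_1 \pi_2 \neq 0$ by minimality and $\alpha_1 \neq \alpha_2$ by simplicity of the recurrence. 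Cramer's rule then expresses both $A_1$ and $A_0$ as explicit rational functions of $G_0, G_1, G_2, G_3$, placing them in $\C(h(x))$.

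To finish, I would invoke the elementary identity $\C(h(x)) \cap \C[x] = \C[h(x)]$. Given a candidate $f \in \C[x]$ written as $p(h(x))/q(h(x))$ with $p, q \in \C[T]$ coprime, the divisibility $q(h(x)) \mid p(h(x))$ in $\C[x]$ combined with the fact that every root $\beta \in \C$ of $q$ lifts via algebraic closedness and $\deg h \geq 1$ to a root $\alpha$ of $q(h(x))$ — at which $p(h(\alpha)) = p(\beta) \neq 0$ by coprimality — forces $q$ to be constant. Applying this to $A_0$ and $A_1$ yields $A_0, A_1 \in \C[h(x)]$, completing the proof. The main point is the nonvanishing of the determinant $G_1^2 - G_0 G_2$, which is precisely where the minimality and simplicity hypotheses of the recurrence are used.
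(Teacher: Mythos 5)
Your proof is correct, and its skeleton is the same as the paper's: recover $A_1,A_0$ from the linear system given by the recurrence at two consecutive indices, observe the solutions lie in $\C(h(x))$, and conclude via $\C(h(x))\cap\C[x]=\C[h(x)]$ (which the paper dismisses as ``immediate by integrality'' and you prove elementarily -- your root-lifting argument is fine). The one genuine difference is how the nonvanishing of the determinant is handled. You prove directly, via the Binet representation, that $G_1^2-G_0G_2=-\pi_1\pi_2(\alpha_1-\alpha_2)^2\neq 0$, using $\pi_1\pi_2\neq 0$ (which the paper has already justified by minimality) and $\alpha_1\neq\alpha_2$ (simplicity); so the system at indices $0,1,2,3$ always works. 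The paper instead allows the possibility that a given window has vanishing determinant and argues only that $G_{m+1}^2=G_mG_{m+2}$ cannot hold for \emph{all} $m$ (else $G_{m+1}=\bigl(A_1\pm\sqrt{A_1^2+4A_0}\bigr)G_m/2$, contradicting minimality), then works with whatever four consecutive terms have nonzero determinant. Your version is slightly sharper and avoids the ``some window works'' step, at the cost of invoking the Binet form and the fact $\pi_1\pi_2\neq0$; the paper's version stays entirely inside $\C[x]$ identities. Either way the hypotheses of minimality and simplicity enter exactly where you say they do, so the proposal stands as a complete proof.
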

\begin{proof}
Note that if $G_0,G_1,A_0,A_1\in \C[h(x)]$ for some polynomial $h\in\C[x]$, then by the recurrence relation it follows that $G_m(x)\in\C[h(x)]$ for every $m\in\N$.

Conversely, assume that $G_m(x)\in\C[h(x)]$ for all $m\in\N$. If $G_0,G_1,G_2,G_3$ (or any four consecutive elements of the sequence) satisfy $G_1^2-G_0G_2\neq 0$, then the linear system $G_2=A_1G_1+A_0G_0,G_3=A_1G_2+A_0G_1$ shows that
\[
A_0=\frac{G_1G_3-G_2^2}{G_1^2-G_0G_2},\quad A_1=\frac{G_1G_2-G_0G_3}{G_1^2-G_0G_2}
\]
and hence $A_0(x),A_1(x)$ are in $\C(h(x))\cap\C[x]=\C[h(x)]$ (the last equality follows immediately by integrality).
Since $G_m(x)\in\C[h(x)]$ for all $m\in\N$ it cannot always hold that $G_{m+1}^2=G_mG_{m+2}$ because in this case a short calculation shows that
\[
G_{m+1}=\left(A_1\pm\sqrt{A_1^2+4A_0}\right)G_m/2,
\]
contradicting the assumption that $(G_n)_{n=0}^\infty$ is a second order linear recurrence (observe that in this case necessarily $\sqrt{A_1(x)^2+4A_0(x)}\in\C[x]$).
\end{proof}

\begin{Le}\label{third}If $h$ is not cyclic and if $G_m(x)\in\C[h(x)]$ for all $m\in\N$, then  \eqref{sum} has a proper vanishing subsum.
\end{Le}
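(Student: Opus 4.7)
The plan is to reduce this to Lemma~\ref{lemma} via Lemma~\ref{newl}. Concretely, I would first observe that the hypothesis $G_m(x)\in\C[h(x)]$ for every $m\in\NN$ is exactly the scenario characterized by Lemma~\ref{newl}: it forces $G_0,G_1,A_0,A_1\in\C[h(x)]$. (In the degenerate case $G_{m+1}^2=G_mG_{m+2}$ one obtains $\sqrt{A_1^2+4A_0}\in\C[x]$ and a first order recurrence, contradicting simplicity and minimality, so this situation does not arise.)

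Once all four polynomials lie in $\C[h(x)]$, I would plug them into formula \eqref{productpi} for the product $\pi_1\pi_2$. Since the numerator and denominator there are polynomial expressions in $G_0, G_1, A_0, A_1$ (with $A_1^2+4A_0\neq0$ guaranteed by the simplicity of the recurrence, as the $\alpha_i$ are distinct), we immediately get $\pi_1\pi_2\in\C(h(x))$. Moreover $A_0(x)^n\in\C[h(x)]\subseteq\C(h(x))$, so
\[
\pi_1\pi_2 A_0(x)^n\in\C(h(x)).
\]

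Now I would invoke Lemma~\ref{either}: because $h$ is by hypothesis not cyclic, we are in the second alternative, i.e.\ $\C(x)\cap\C(y)=\C(h(x))$. Lemma~\ref{lemma} then applies directly and, from $\pi_1\pi_2 A_0(x)^n\in\C(h(x))$, yields the existence of a proper vanishing subsum of \eqref{sum}, which is exactly what we wanted. There is no real obstacle here: the whole content of the statement is to assemble Lemma~\ref{newl}, the explicit expression \eqref{productpi}, and the characterization in Lemmas~\ref{either} and \ref{lemma}; the proof is essentially a one-line verification that the hypothesis of Lemma~\ref{lemma} is met.
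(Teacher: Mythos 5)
Your proposal is correct and follows essentially the same route as the paper: invoke Lemma~\ref{newl} to get $G_0,G_1,A_0,A_1\in\C[h(x)]$, deduce $\pi_1\pi_2A_0(x)^n\in\C(h(x))$ from \eqref{productpi}, and conclude via Lemma~\ref{either} and Lemma~\ref{lemma}. The extra remarks (on the degenerate case inside Lemma~\ref{newl} and on $A_1^2+4A_0\neq0$) are harmless but not needed beyond what those lemmas already provide.
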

\begin{proof}
Since $h$ is not cyclic, by Lemma \ref{either} it follows that $C(x)\cap\C(y)=\C(h(x))$. Assume that $G_m(x)\in\C[h(x)]$ for all $m\in\N$. Then by Lemma~\ref{newl} it follows that $G_0(x), G_1(x), A_0(x), A_1(x)\in\C[h(x)]$. From \eqref{productpi} we conclude that $\pi_1\pi_2\in\C(h(x))$ and hence $\pi_1\pi_2A_0(x)^n\in\C(h(x))$. By Lemma \ref{lemma} it follows that  \eqref{sum} has a proper vanishing subsum.
\end{proof}

We complete a proof of Theorem~\ref{main} with the help of two lemmas. First note that by \eqref{pis} it follows that $\pi_1=\pi_2$ if and only if $2G_1(x)=G_0(x)A_1(x)$.

\begin{Le}\label{lem1}
If $h$ is neither dihedral nor cyclic, and it does not hold that $G_m(x)\in\C[h(x)]$ for all $m$,
then \eqref{sum} has no proper vanishing subsum if $A_0(x)$ is constant  and  $2G_1(x)=G_0(x)A_1(x)$, i.e.\@ $\pi_1=\pi_2$.

Furthermore, if $h$ is not cyclic, and it does not hold that $G_m(x)\in\C[h(x)]$ for all $m$, then \eqref{sum} has no proper vanishing subsum if $\pi_1=\pi_2=\pi\in \C$ and either $\deg A_0=1$ or $\sqrt{A_1(x)^2+4A_0(x)}\in \C[x]$.
\end{Le}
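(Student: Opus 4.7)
The plan is to apply Lemma~\ref{lemma} throughout. Since $h$ is not cyclic, Lemma~\ref{either} gives $\C(x)\cap\C(y)=\C(h(x))$, and Lemma~\ref{lemma} asserts that a proper vanishing subsum of~\eqref{sum} exists exactly when $\pi_1\pi_2 A_0(x)^n\in\C(h(x))$, which by Lemma~\ref{ours}(1) is equivalent to the polynomial being invariant under $x\mapsto y$. I will assume such an invariance and derive a contradiction in each case, either by forcing $h$ to be cyclic or dihedral (excluded by the Part~1 hypothesis) or by showing $A_0,A_1,G_0,G_1\in\C[h(x)]$, whereupon Lemma~\ref{newl} forces $G_m(x)\in\C[h(x)]$ for every $m$, contradicting the hypothesis.

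For the first claim, $\pi_1=\pi_2=G_0(x)/2$ and $A_0=a_0\in\C$ reduce the issue to $G_0(x)^2\notin\C[h(x)]$. If $G_0(x)^2\in\C[h(x)]$, the identity $G_n=(G_0/2)D_n(A_1,-a_0)$ from Proposition~\ref{propd} combined with $G_n\in\C[h(x)]$ yields $D_n(A_1(x),-a_0)^2\in\C[h(x)]$, hence $D_n(A_1(x),-a_0)=\pm D_n(A_1(y),-a_0)$. The $+$ branch gives $D_n\circ A_1=s\circ h$, so $(A_1,h)$ have a common composite. Proposition~\ref{indcor}'s cases~(1) and~(3) immediately force $h$ cyclic or dihedral, case~(4) gives $A_1\in\C[h(x)]$ and hence $G_1=G_0A_1/2\in\C[h(x)]$ (a contradiction through Lemma~\ref{newl}), and case~(2) is eliminated via Proposition~\ref{invar} and Lemma~\ref{sequen}: the complete decomposition of $D_n\circ A_1$ contains only cyclic or dihedral indecomposable components (since $D_n$ is dihedral and $A_1$ is cyclic in case~(2)), which forces $h$ itself to be cyclic or dihedral. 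The $-$ branch is handled via the factorization of $D_n(u,a)+D_n(v,a)$ implied by Proposition~\ref{propd} together with the parity identity $D_n(-v,a)=(-1)^nD_n(v,a)$: this yields either $A_1(x)=\pm A_1(y)$ (so the argument recurses on $A_1^2$) or a Dickson-type quadratic relation $A_1(x)^2\pm\gamma_kA_1(x)A_1(y)+A_1(y)^2-\delta_k^2a_0=0$, which forces $A_1$ to have a dihedral composition factor and reduces to the same Proposition~\ref{indcor} analysis.

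For the second claim, $\pi_1=\pi_2=\pi\in\C$ and $G_0=2\pi$ reduce the issue to $A_0(x)^n\notin\C[h(x)]$. Under $\deg A_0=1$, factoring $A_0(x)^n=q(h(x))$ and comparing multiplicities at the unique root of $A_0$ forces $q(T)=c(T-t_0)^e$ and $h(x)-t_0=\lambda(x-a)^{\deg h}$, so $h$ is cyclic --- contradiction. Under $\sqrt{A_1^2+4A_0}\in\C[x]$, we have $\alpha_1,\alpha_2\in\C[x]$ and $G_n=\pi(\alpha_1^n+\alpha_2^n)$; the assumptions make $\alpha_1^n+\alpha_2^n$ and $(\alpha_1\alpha_2)^n$ both lie in $\C[h(x)]$, so $\alpha_i^n$ are roots of a common monic quadratic over $\C[h(x)]$ and $(\alpha_1^n-\alpha_2^n)^2\in\C[h(x)]$. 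Lemma~\ref{ours}(1) yields two subcases: the diagonal case, in which $\alpha_i^n\in\C[h(x)]$ for each $i$, and the swap case $\alpha_1(x)^n=\alpha_2(y)^n$, $\alpha_2(x)^n=\alpha_1(y)^n$. In the diagonal case, Proposition~\ref{indcor} applied to each $(\alpha_i,h)$ with $u=x^n$, together with Proposition~\ref{invar}/Lemma~\ref{sequen} and the degree-coprimality from cases~(2) and~(3), eliminates all structural options except $\alpha_1,\alpha_2\in\C[h(x)]$; then $A_0,A_1\in\C[h(x)]$ and Lemma~\ref{newl} yields the contradiction. The swap case reduces to the diagonal one by passing to the product $\alpha_1\alpha_2$ and invoking the non-degeneracy $\alpha_1/\alpha_2\notin\C^*$ to rule out the coincidence $\alpha_1^n=\alpha_2^n$.

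The hardest part is showing that Proposition~\ref{indcor}'s ``monomial-swap'' (case~(2)) and ``Dickson-swap'' (case~(3)) structures collapse. By Proposition~\ref{invar}, the multiset of monodromy groups of the indecomposable components is an invariant of the complete decomposition; and Lemma~\ref{sequen} shows that the components of $D_n\circ A_1$ (or of $\alpha_i^n=x^n\circ\alpha_i$) produce only cyclic or dihedral monodromies. Matching these against the indecomposable right-factor $h$, combined with the gcd conditions $\gcd(\deg A_1,\deg h)=1$ (or $\gcd(\deg\alpha_i,\deg h)=1$) imposed by cases~(2)/(3), leaves only $\deg h=1$, an impossibility. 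This monodromy matching is the delicate technical core of the argument.
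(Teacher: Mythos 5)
Your overall frame (Lemma~\ref{either} plus Lemma~\ref{lemma} to translate ``proper vanishing subsum'' into $\pi_1\pi_2A_0(x)^n\in\C(h(x))$, then Dickson identities, Proposition~\ref{indcor} and monodromy invariance) is the paper's, and your first claim is essentially right, up to small omissions: before invoking Proposition~\ref{indcor} for the pair $(A_1,h)$ you must dispose of $\deg A_1\le 1$ separately (the paper does: $A_1$ constant gives all $G_m\in\C[h(x)]$ once $G_0\in\C[h(x)]$, and $\deg A_1=1$ makes $D_n(A_1(x),-a_0)$ equivalent to $D_n(x,-a_0)$, forcing $h$ cyclic or dihedral), and your appeal to Lemma~\ref{newl} in case (4) needs $G_0\in\C[h(x)]$, which you never record --- it follows at once from $G_0(x)^2\in\C[h(x)]$, since $G_0(x)=\pm G_0(y)\in\C(x)\cap\C(y)=\C(h(x))$. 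The same observation makes your ``$-$ branch'' detour unnecessary: $D_n(A_1(x),-a_0)=\pm D_n(A_1(y),-a_0)$ already puts $D_n(A_1(x),-a_0)$ in $\C(x)\cap\C(y)=\C(h(x))$, so only the composite-analysis is needed.

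The genuine gap is in the second claim, subcase $\sqrt{A_1^2+4A_0}\in\C[x]$. There the hypothesis is only that $h$ is \emph{not cyclic}; dihedral $h$ is allowed. Your plan reduces to $\alpha_i(x)^n\in\C[h(x)]$ and then eliminates the configurations of Proposition~\ref{indcor} for $(\alpha_i,h)$ by monodromy matching and the gcd conditions, claiming this ``leaves only $\deg h=1$.'' That does not work for case (3): if $\alpha_i$ and $h$ are both Dickson polynomials (up to linears) with the same parameter, Proposition~\ref{invar} and Lemma~\ref{sequen} only tell you that every indecomposable component of $x^n\circ\alpha_i$ has cyclic or dihedral monodromy, hence that $h$ is cyclic or dihedral --- and dihedral is not excluded here, so no contradiction follows; nor do the coprimality conditions force $\deg h=1$. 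Ruling this configuration out would require an extra ingredient (e.g.\ an analysis of $\C[D_m]\cap\C[D_{n'}]$ or Ritt's second theorem) that you have not supplied. The paper avoids the issue entirely: it deduces $A_0(x)=A_0(y)$, then uses the factorization of $D_n(X,a)-D_n(Y,a)$ from Proposition~\ref{propd} applied to $D_n(A_1(x),-A_0(x))=D_n(A_1(y),-A_0(x))$, so that either $A_1(x)=\pm A_1(y)$ or the quadratic relation \eqref{gamdel} holds; in the latter case $\sqrt{A_1(y)^2+4A_0(y)}\in\C[y]$ puts $A_1(x)\in\C(x)\cap\C(y)=\C(h(x))$, and in all cases $A_0,A_1\in\C[h(x)]$ yields $G_m(x)\in\C[h(x)]$ for all $m$, contradicting the hypothesis --- an argument valid even when $h$ is dihedral. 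You should either adopt that route for this subcase or add a genuine argument excluding the Dickson--Dickson case; as written, the step fails. (Your treatment of $\deg A_0=1$ via multiplicities at the root of $A_0$, forcing $h(x)-t_0=\lambda(x-a)^{\deg h}$, is a correct alternative to the paper's degree argument, and the ``swap'' case is indeed harmless, since $\alpha_1(x)^n=\alpha_2(y)^n\in\C[y]$ already gives $\alpha_1(x)^n\in\C(h(x))$ and then non-degeneracy is contradicted.)
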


\begin{proof}
Assume that $h$ is not cyclic, and it does not hold that $G_m(x)\in\C[h(x)]$ for all $m$, and that there exists a proper vanishing subsum of \eqref{sum}. Recall that by Lemma~\ref{either} and Lemma~\ref{lemma} it follows that $\pi_1\pi_2A_0(x)^n\in \C(h(x))$.

Assume first that $A_0(x)=a_0\in \C$  and $\pi_1=\pi_2=:\pi$. Then $G_0(x)=2\pi$ and since $A_0(x)=a_0\in \C$, it follows that
\[
\pi_1\pi_2A_0(x)^n=\frac{a_0^nG_0(x)^2}{4}\in \C(h(x)),
\]
and hence $G_0(x)^2\in \C(h(x))$. Then $G_0(x)^2=G_0(y)^2$ by Lemma~\ref{ours}, so $G_0(x)= \pm G_0(y)$. Thus, $G_0(x)\in \C(x)\cap \C(y)=\C(h(x))$. Moreover, $G_0(x)\in \C(h(x))\cap\C[x]=\C[h(x)]$.

Furthermore, by Proposition~\ref{propd} we have
\[
 \quad G_n(x)=\pi (\alpha_1^n+\alpha_2^n) =\frac{1}{2}G_0(x)D_n(A_1(x), -a_0)\in \C[h(x)].
\]
Since $G_0(x)\in \C[h(x)]$,
it follows that $D_n(A_1(x), -a_0)\in \C[h(x)]$.
Observe that $\deg A_1>1$. Namely, if $A_1(x)=a_1\in\C$, since $G_0(x)\in \C[h(x)]$ it follows that for any $m\in\N$ we have that
\[
G_m(x)=\frac{1}{2}G_0(x)D_m(a_1,-a_0)\in\C[h(x)],
\]
a contradiction with the assumption. If $\deg A_1=1$, we have
\[G_n(x)=\frac{1}{2}G_0(x)D_n(A_1(x),-a_0).\]
Since $G_n(x),\ G_0(x)\in\C[h(x)]$, it follows that
\[
D_n(A_1(x),-a_0)\in\C(h(x))\cap\C[x]=\C[h(x)].
\]
Obviously, $D_n(A_1(x),-a_0)$ is equivalent to $D_n(x,-a_0)$, which is either cyclic or dihedral. By Lemma \ref{sequen}, Proposition \ref{invar} and Proposition \ref{cydih} it follows that $h$ is either cyclic or dihedral, a contradiction with the assumption.
We conclude that $\deg A_1,\deg h>1$ and $A_1$ and $h$ have a common composite.
We now use Proposition \ref{indcor}. If $A_1(x)\in \C[h(x)]$, since $G_0(x)\in \C[h(x)]$ it follows that for any $m\in \N$ we have
\[
G_m(x)=\frac{1}{2}G_0(x)D_m(A_1(x), -a_0)\in \C[h(x)],
\]
a contradiction with the assumption.
Assume thus that $A_1(x)\notin \C[h(x)]$. By Proposition \ref{indcor}, since $h$ is neither cyclic nor dihedral it follows that
\[
h(x)=\ell_2(x)\circ x^rP(x^s)\circ \ell_3(x), \quad A_1(x)=\ell_1(x)\circ x^s\circ \ell_3(x)
\]
for some linear polynomials $\ell_1, \ell_2, \ell_3\in \C[x]$ and $s, r\in \N$, $s\ge2$. In particular, $A_1$ is cyclic. By Proposition~\ref{invar} and Lemma~\ref{sequen}  it follows that the collection of  monodromy groups in any complete decomposition of  $D_n(A_1(x), -a_0)$ consists only of cyclic or dihedral groups. Since $D_n(A_1(x), -a_0)\in \C[h(x)]$, by Proposition~\ref{cydih} it follows that $h$ is either cyclic or dihedral, a contradiction.

We now prove the second statement. Assume that $\pi_1=\pi_2=\pi\in \C$ and either $\deg A_0=1$ or $\sqrt{A_1(x)^2+4A_0(x)}\in \C[x]$. Then
\[
G_0(x)=2\pi, \quad G_{m}(x)=\pi D_{m}(A_1(x), -A_0(x))\quad\text{for }m\in\N.
\]
Recall that by Lemma~\ref{either} and Lemma~\ref{lemma} it again follows that
\[
\pi_1\pi_2A_0(x)^n=\pi^2A_0(x)^n\in \C(h(x))\cap \C[x]=\C[h(x)].
\]
It follows that $A_0(x)^n\in \C[h(x)]$ and thus $A_0(x)^n=A_0(y)^n$ by Lemma~\ref{ours}. Then $A_0(x)=\zeta A_0(y)$ for some $n$-th root of unity $\zeta$, so $A_0(x)\in \C(x)\cap \C(y)=\C(h(x))$, and moreover $A_0(x)\in \C[h(x)]$. In particular, $A_0(x)=A_0(y)$.  If $\deg A_0=1$ we have a contradiction since $\deg h\geq 2$.

Thus
$\sqrt{A_1(x)^2+4A_0(x)}\in \C[x]$.
Since
\[
 D_n(A_1(x), -A_0(x))=\frac{1}{\pi}G_n(x)\in \C[h(x)],
\]
 by Lemma~\ref{ours} we have
\[
D_n(A_1(x), -A_0(x))=D_n(A_1(y), -A_0(y)).
\]
Since $A_0(x)=A_0(y)$ we further get
\[
D_n(A_1(x), -A_0(x))=D_n(A_1(y), -A_0(x)).
\]
Using Proposition \ref{propd} we get that either $A_1(x)=\pm A_1(y)$  or
\begin{equation}\label{gamdel}
A_1(x)^2-\gamma_kA_1(x)A_1(y)+A_1(y)^2-\delta_k^2A_0(x)=0,
\end{equation}
for $\gamma_k,\delta_k\in \C$ given in the proposition.
If $A_1(x)=\pm A_1(y)$, then we have $A_1(x)\in \C(x)\cap \C(y)=\C(h(x))$. Then clearly $A_1(x)\in \C[(h(x)]$. Since also $A_0(x)\in \C[h(x)]$ we have that for any $m$
\[
G_m(x)=\pi D_{m}(A_1(x), -A_0(x))\in \C[h(x)],
\]
a contradiction with the assumption.
Thus we get that \eqref{gamdel} holds. A short calculation shows that
\[
A_1(x)=\frac{\gamma_kA_1(y)\pm \delta_k\sqrt{A_1(y)^2+4A_0(y)}}{2}.
\]
Since $\sqrt{A_1(x)^2+4A_0(x)}\in \C[x]$, we have that $A_1(x)\in \C(x)\cap \C(y)=\C(h(x))$. Moreover, $A_1(x)\in \C[h(x)]$. Then $A_1(x)=A_1(y)$, a contradiction with the assumption
\end{proof}

\begin{Le}\label{lem2}
If $h$ is neither dihedral nor cyclic, and it does not hold that $G_m(x)\in\C[h(x)]$ for all $m$, then \eqref{sum} has no proper vanishing subsum if $A_0(x)$ is constant, $G_0(x)=A_1(x)$, and either $G_1(x)=2A_0(x)+G_0(x)^2$ or $G_1(x)=-2A_0(x)$.

Furthermore, if $h$ is not cyclic, and it does not hold that $G_m(x)\in\C[h(x)]$ for all $m$, then \eqref{sum} has no proper vanishing subsum if  $G_0(x)=A_1(x)$ and any of the following holds:
\begin{enumerate}
\item[i)]     $G_1(x)=2A_0(x)+G_0(x)^2$,  and \\ either $\sqrt{A_1(x)^2+4A_0(x)}\in \C[x]$ or $\deg A_0=1$,
\item[ii)]    $G_1(x)=-2A_0(x)$, and  \\ either $\sqrt{A_1(x)^2+4A_0(x)}\in\C[x]$ or $\deg A_0=1$.
\end{enumerate}
\end{Le}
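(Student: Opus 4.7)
The plan is to mirror the structure of the proof of Lemma~\ref{lem1}, with the key preliminary observation being that the hypothesis $G_0=A_1$ together with either $G_1=2A_0+G_0^2$ or $G_1=-2A_0$ forces $\{\pi_1,\pi_2\}=\{\alpha_1,\alpha_2\}$ (by direct substitution into \eqref{pis}). Consequently $\pi_1\pi_2=\alpha_1\alpha_2=-A_0$ and, via the identity $D_k(X+Y,XY)=X^k+Y^k$ in Proposition~\ref{propd}, one gets
\[
G_n(x)=D_{n+1}(A_1(x),-A_0(x))
\]
in Case~(i) and
\[
G_n(x)=-A_0(x)\cdot D_{n-1}(A_1(x),-A_0(x))
\]
in Case~(ii) (for $n\ge 1$; the case $n=0$ puts $A_1=G_0$ directly into $\C[h(x)]$ and is immediate). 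In particular $\pi_1\pi_2 A_0(x)^n=-A_0(x)^{n+1}$ in both cases.

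For the first part ($A_0=a_0\in\C$) I would argue exactly as in the first part of Lemma~\ref{lem1}. From $G_n\in\C[h(x)]$ it follows that, up to a nonzero constant, $D_{n\pm 1}(\,\cdot\,,-a_0)\circ A_1\in\C[h(x)]$, so $A_1$ and $h$ have a common composite. The boundary cases $\deg A_1\le 1$ are handled directly: non-degeneracy excludes $A_1$ constant, while $A_1$ linear makes $G_n$ itself cyclic or dihedral and then Lemma~\ref{sequen} forces $h$ to be cyclic or dihedral, contradicting the hypothesis on $h$. For $\deg A_1\ge 2$ I would apply Proposition~\ref{indcor} with $f_1=A_1$, $f_2=h$: its cases (i) and (iii) make $h$ cyclic or dihedral; case (iv) gives $A_1\in\C[h(x)]$ and combined with $A_0\in\C\subseteq\C[h(x)]$, $G_0=A_1\in\C[h(x)]$, $G_1\in\C[h(x)]$, Lemma~\ref{newl} forces $G_m\in\C[h(x)]$ for every $m$; case (ii) makes $A_1$ cyclic, so Lemma~\ref{sequen} applied to $A_1$ and to $D_{n\pm 1}(\,\cdot\,,-a_0)$ shows that every complete decomposition of $G_n$ consists of cyclic or dihedral factors, whence Proposition~\ref{invar} and Proposition~\ref{cydih} force $h$ to be cyclic or dihedral. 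Each outcome contradicts a running hypothesis.

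For the second part ($\deg A_0=1$ or $\sqrt{A_1^2+4A_0}\in\C[x]$) I would assume a proper vanishing subsum exists and derive a contradiction. By Lemma~\ref{either} and Lemma~\ref{lemma} the condition reads $-A_0(x)^{n+1}\in\C(h(x))\cap\C[x]=\C[h(x)]$, and the standard chain from Lemma~\ref{lem1}'s second part---$A_0(x)^{n+1}=A_0(y)^{n+1}\Rightarrow A_0(x)=\zeta A_0(y)$ for a root of unity $\zeta$, whence $A_0\in\C[h(x)]$---either immediately contradicts $\deg h\ge 2$ (when $\deg A_0=1$) or leaves us with $\sqrt{A_1^2+4A_0}\in\C[x]$ together with $A_0\in\C[h(x)]$. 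In the latter sub-case, $G_n\in\C[h(x)]$ and $A_0\in\C[h(x)]$ together give $D_{n\pm 1}(A_1(x),-A_0(x))\in\C[h(x)]$ (in Case~(ii) one first cancels the prefactor $-A_0$, which is legitimate since $A_0\in\C[h(x)]$), and Lemma~\ref{ours} yields
\[
D_{n\pm 1}(A_1(x),-A_0(x))=D_{n\pm 1}(A_1(y),-A_0(x)).
\]
Applying the factorization of $D_k(X,a)-D_k(Y,a)$ from Proposition~\ref{propd} with $a=-A_0(x)$, one of the factors must vanish: the linear factors give $A_1(x)=\pm A_1(y)$, forcing $A_1\in\C[h(x)]$; a quadratic factor $A_1(x)^2-\gamma_k A_1(x)A_1(y)+A_1(y)^2-\delta_k^2 A_0(x)=0$, solved for $A_1(x)$ and combined with $\gamma_k^2-4=\delta_k^2$ and the polynomial identity $\sqrt{A_1(y)^2+4A_0(y)}\in\C[y]$, also gives $A_1(x)\in\C(x)\cap\C(y)=\C(h(x))$, hence $A_1\in\C[h(x)]$. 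Either way $A_0,A_1,G_0,G_1\in\C[h(x)]$ and Lemma~\ref{newl} produces the desired contradiction.

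The hardest step will be the bookkeeping in the second part, specifically bootstrapping from $A_0^{n+1}\in\C[h(x)]$ to $A_0\in\C[h(x)]$, then cancelling the prefactor $-A_0$ in Case~(ii) before invoking the Dickson factorization, and finally using $\gamma_k^2-4=\delta_k^2$ together with $\sqrt{A_1^2+4A_0}\in\C[x]$ to trap the quadratic factor into $A_1\in\C[h(x)]$. This is the only step where the square-root hypothesis is genuinely used, and it closely mirrors the corresponding step of Lemma~\ref{lem1}'s second part.
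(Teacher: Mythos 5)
Your proposal is correct and takes essentially the same route as the paper's proof: the identification $\{\pi_1,\pi_2\}=\{\alpha_1,\alpha_2\}$ with the Dickson representations $G_m=D_{m+1}(A_1,-A_0)$ and $G_m=-A_0\,D_{m-1}(A_1,-A_0)$, the chain $A_0(x)^{n+1}\in\C[h(x)]\Rightarrow A_0(x)\in\C[h(x)]$ (killing $\deg A_0=1$), the factorization of $D_i(A_1(x),-A_0(x))-D_i(A_1(y),-A_0(x))$ combined with the square-root hypothesis to force $A_1\in\C[h(x)]$, and Proposition~\ref{indcor} together with Lemma~\ref{sequen}, Proposition~\ref{invar} and Proposition~\ref{cydih} in the constant-$A_0$ case are exactly the paper's steps. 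Your small deviations (invoking Lemma~\ref{newl} rather than the explicit formula \eqref{G_m} for the final contradiction, and excluding constant $A_1$ via non-degeneracy) are only cosmetic.
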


\begin{proof}
Assume that $h$ is not cyclic, and it does not hold that $G_m(x)\in\C[h(x)]$ for all $m$, and that there exists a proper vanishing subsum of \eqref{sum}. Recall that by Lemma~\ref{either} and Lemma~\ref{lemma} it follows that $\pi_1\pi_2A_0(x)^n\in \C(h(x))$.

Assume further  that $G_0(x)=A_1(x)$, and either $G_1(x)=2A_0(x)+G_0(x)^2$ or $G_1(x)=-2A_0(x)$. Then
\begin{align*}
\pi_1+\pi_2&=G_0(x)=A_1(x)=\alpha_1+\alpha_2\\
\pi_1\pi_2&=-\frac{G_1(x)^2-G_0(x)G_1(x)A_1(x)-A_0(x)G_0(x)^2}{A_1(x)^2+4A_0(x)}=-A_0(x)=\alpha_1\alpha_2.
\end{align*}
Thus, either $\pi_1=\alpha_1$ and  $\pi_2=\alpha_2$, or  $\pi_1=\alpha_2$ and  $\pi_2=\alpha_1$. In both cases,
\[
\pi_1\pi_2A_0(x)^n=-A_0(x)^{n+1}\in \C(h(x)).
\]
Then $A_0(x)^{n+1}\in \C(h(x))\cap \C(x)=\C[h(x)]$.
By Lemma~\ref{ours} it follows that $A_0(x)^{n+1}=A_0(y)^{n+1}$. Then $A_0(x)=\zeta A_0(y)$ for some $(n+1)$-st root of unity $\zeta$, so $A_0(x)\in \C(x)\cap \C(y)=\C(h(x))$, and moreover $A_0(x)\in \C[h(x)]$. In particular, $A_0(x)=A_0(y)$.

Since either $\pi_1=\alpha_1$ and  $\pi_2=\alpha_2$, or  $\pi_1=\alpha_2$ and  $\pi_2=\alpha_1$, by Proposition~\ref{propd} we have
\begin{equation}\label{G_m}
G_m(x)=
  \begin{cases}
    D_{m+1}(A_1(x), -A_0(x)), & \text{if}\  \pi_1=\alpha_1, \ \pi_2=\alpha_2\\
    -A_0(x)D_{m-1}(A_1(x), -A_0(x)), & \text{if}\  \pi_1=\alpha_2, \ \pi_2=\alpha_1.\\
  \end{cases}
\end{equation}
for any $m$. Since $A_0(x)\in \C[h(x)]$ and $G_n(x)\in \C[h(x)]$ it follows that for some $i\in \{n-1, n+1\}$ we have
 \begin{equation}\label{concl}
D_{i}(A_1(x), -A_0(x))\in \C[h(x)],
\end{equation}
and consequently by Lemma~\ref{ours} that
\[
D_{i}(A_1(x), -A_0(x))=D_{i}(A_1(y), -A_0(x)).
\]
Then either $A_1(x)=\pm A_1(y)$ or
\[
A_1(x)^2-\gamma_kA_1(x)A_1(y)+A_1(y)^2-\delta_k^2A_0(x)=0,
\]
for some of $\gamma_k,\delta_k$ given in Proposition \ref{propd}.
If $A_1(x)=\pm A_1(y)$, then $A_1(x)\in \C(x)\cap \C(y)=\C(h(x))$. Then clearly $A_1(x)\in \C[(h(x)]$. Since also $A_0(x)\in \C[h(x)]$, by \eqref{G_m} we have that $G_m(x)\in \C[h(x)]$ for any $m$, a contradiction with the assumption. Thus we get  that \eqref{gamdel} holds. A short calculation shows that
\[
A_1(x)=\frac{\gamma_kA_1(y)\pm \delta_k\sqrt{A_1(y)^2+4A_0(y)}}{2}.
\]
If  $\sqrt{A_1(x)^2+4A_0(x)}\in \C[x]$, we have that $A_1(x)\in \C(x)\cap \C(y)=\C(h(x))$. Moreover, $A_1(x)\in \C[h(x)]$. Then $A_1(x)=A_1(y)$, a contradiction with the assumption.
If $\deg A_0=1$ we have a contradiction with $A_0(x)\in \C[h(x)]$ since $\deg h\geq 2$.

 It remains to examine the case when in addition to the assumptions stated at the beginning of the proof we have $A_0(x)=a_0\in \C$ and $h$ is not dihedral.

By \eqref{concl} we have $D_{i}(A_1(x), -a_0)\in \C[h(x)]$.  If $A_1(x)\in \C[h(x)]$,  by \eqref{G_m} it follows that  $G_m(x)\in \C[h(x)]$ for any $m\in \N$,
a contradiction with the assumption.
Assume thus that $A_1(x)\notin \C[h(x)]$.
As in the proof of Lemma \ref{lem1}, we conclude $\deg A_1>1$.
By Proposition \ref{indcor}, since $h$ is neither cyclic nor dihedral it follows that
\[
h(x)=\ell_2(x)\circ x^rP(x^s)\circ \ell_3(x), \quad A_1(x)=\ell_1(x)\circ x^s\circ \ell_3(x)
\]
for some linear polynomials $\ell_1, \ell_2, \ell_3\in \C[x]$ and $s, r\in \N$, $s\ge2$. In particular, $A_1$ is cyclic. By Proposition~\ref{invar} and Lemma~\ref{sequen}  it follows that the collection of  monodromy groups in any complete decomposition of  $D_i(A_1(x), -a_0)$ consists only of cyclic or dihedral groups. Since $D_i(A_1(x), -a_0)\in \C[h(x)]$, by Proposition~\ref{cydih} it follows that $h$ is either cyclic or dihedral, a contradiction.

\end{proof}

\begin{proof}[Proof of Theorem~\ref{main}]
By Lemma~\ref{lem0}, Lemma~\ref{lem1} and  Lemma~\ref{lem2} we conclude the proof of Theorem~\ref{main}.
\end{proof}

\section{Proof of Theorem \ref{thm1}}\label{sec4}

\begin{proof}[Proof of Theorem \ref{thm1}]
Assume that $G_n(x)=g(h(x))$, where $h$ is indecomposable and neither cyclic nor dihedral. Recall that $x$ and $y$, which define \eqref{sum}, are  such that $h(x)=h(y)$ and $x\neq y$. From Lemma~\ref{either} it follows that $\C(x)\cap \C(y)=\C(h(x))$.  Assume further that there is no proper vanishing subsum of \eqref{sum} and write it as
\begin{align}
\label{equ2}
1-\frac{\pi_1\alpha_1^n}{\rho_2\beta_2^n}+\frac{\rho_1\beta_1^n}{\rho_2\beta_2^n}-\frac{\pi_2\alpha_2^n}{\rho_2\beta_2^n}=0.
\end{align}
Define
\[
u_1=-\frac{\pi_1\alpha_1^n}{\rho_2\beta_2^n}, \quad u_2=\frac{\rho_1\beta_1^n}{\rho_2\beta_2^n}, \quad u_3=-\frac{\pi_2\alpha_2^n}{\rho_2\beta_2^n},
\]
and also
\[
v_1=\frac{\alpha_1}{\beta_2}, \quad v_2=\frac{\beta_1}{\beta_2}, \quad v_3=\frac{\alpha_2}{\beta_2},
\]
\[
w_1=\frac{\pi_1}{\rho_2}, \quad w_2=\frac{\rho_1}{\rho_2}, \quad w_3=\frac{\pi_2}{\rho_2}.
\]
Let $F=\C(x,y,\alpha_1,\alpha_2, \beta_1, \beta_2)$ and let $\HH$ be the projective height on $F/\C(x)$, defined as in Section~\ref{sec3}.
By Lemma \ref{propert}, we find the estimate
\begin{align*}
\HH\left(\frac{\pi_i\alpha_i^n}{\rho_2\beta_2^n}\right)\ge n\cdot\HH\left(\frac{\alpha_i}{\beta_2}\right)-\HH\left(\frac{\pi_i}{\rho_2}\right),\quad i=1,2,
\end{align*}
and similarly we argue for $u_2$.
So, for $i=1,2,3$, we have
\[
\HH(u_i)\ge n\HH(v_i)-\HH(w_i).
\]
Note that if for some $i$ we have $\HH\left(v_i\right)\neq0$, then
\[
n\le\left(\HH(u_i)+\HH(w_i)\right)\cdot\HH(v_i)^{-1}.
\]
Since  $(G_n(x))_{n=0}^\infty$ is non-degenerate, the same holds for the sequence $(G_n(y))_{n=0}^\infty$, i.e. $\beta_1/\beta_2\notin \C$. It follows that $\HH(v_2)=\HH(\beta_1/\beta_2)\neq 0$ and thus
\begin{align}\label{Hu_2}
n \leq (\HH(u_2)+\HH(w_2))\cdot \HH(v_2)^{-1}.
\end{align}
On the other hand, we find the following upper bound for the height of $G_n(x)$:
\begin{align*}
\HH(G_n(x))&=\HH(\pi_1\alpha_1^n+\pi_2\alpha_2^n)\\
				&\le \HH(\pi_1)+n\HH(\alpha_1)+\HH(\pi_2)+n\HH(\alpha_2)\\
				&\le n(\HH(\alpha_1)+\HH(\alpha_2)+\HH(\pi_1)+\HH(\pi_2)).
\end{align*}
Using \eqref{Hu_2}, we conclude that
\begin{align}\label{equ3}
\HH(G_n(x)) \le\left(\HH(u_2)+\HH(w_2)\right)\HH(v_2)^{-1}(\HH(\alpha_1)+\HH(\alpha_2)+\HH(\pi_1)+\HH(\pi_2)).
\end{align}

Now consider equation (\ref{equ2}), which by assumption has no proper vanishing subsum.
Let $A=\{\alpha_i,\pi_i,\beta_i,\rho_i,i=1,2\}$ and put
\[
S:=\{\nu\in S_0:\ \nu(f)\neq0 \text{ for some } f\in A\}\cup S_\infty,
\]
where  $S_0$ denotes the set of finite valuations and $S_\infty$ denotes the set of infinite valuations on $F$.
Then by Theorem \ref{bm} it follows that
\begin{align}\label{bm2}
\HH(u_2)\le|S|+2\g-2,
\end{align}
where $\g$ is the genus of $F/\C$. We now estimate the genus and $|S|$ in terms of $\deg h$.
We start with the genus. In order to use Castelnuovo's inequality (Theorem \ref{castelnuovo}), we define
\begin{align*}
F_1=\C(x,\alpha_1,\alpha_2), \quad F_2=\C(y,\beta_1,\beta_2).
\end{align*}
Note that $\C$ is the field of constants of $F_1,F_2$ and that $F=F_1F_2$. Let $n_i:=\left[F:F_i\right]$, $i=1,2$. Recall that the $\alpha_i$'s and $\beta_i$'s are  roots of a monic quadratic polynomial and that $[\C(x, y):\C(x)]< \deg h$ by Lemma~\ref{ours}. Thus $n_i< 2\deg h$. For $i=1,2$ let $\g_i$ be the genus of $F_i/\C$.
Note that since
\begin{equation}\label{alpha}
\alpha_1=\frac{A_1(x)-\sqrt{A_1(x)^2+4A_0(x)}}{2}, \quad\alpha_2=\frac{A_1(x)+\sqrt{A_1(x)^2+4A_0(x)}}{2},
\end{equation}
we have that $\C(x,\alpha_1,\alpha_2)=\C(x,\sqrt{\Delta(x)})$, where $\Delta(x)=A_1(x)^2+4A_0(x)$. Now Riemann's inequality (Theorem \ref{riemann}) yields
\begin{align*}
\g_1\le([F_1:\C(x)]-1)([F_1:\C(\sqrt{\Delta(x)})]-1).
\end{align*}
Since $\sqrt{\Delta(x)}$ is a root of $T^2-\Delta(x)\in\C(x)[T]$ and $x$ is a root of $\Delta(T)-\sqrt{\Delta(x)}^2\in\C(\sqrt{\Delta(x)})[T]$, we conclude that
\[
\g_1\le(2-1)\cdot(\deg \Delta -1)=\deg \Delta -1 \leq C_1-1,
\]
where $C_1:=\max\{\deg A_0,2\deg A_1\}$ (it will be shown later that indeed $C_1\ge 1$, by the non-degeneracy of the sequence).

Since $F_1$ and $F_2$ are isomorphic function fields, they have the same genus and hence the same bound holds for $\g_2$. Therefore we find that $\g_i\le C_1-1,\ i=1,2$.
By Castelnuovo's inequality (Theorem \ref{castelnuovo}) we get
\begin{align*}
\g&\le n_1\g_1+n_2\g_2+(n_1-1)(n_2-1)\\
&<4\deg h  (C_1-1)+(2\deg h-1)^2<4C_1 \deg h^2.
\end{align*}
To estimate $|S|$, let
\begin{align*}
S_1&=\left\{\nu\in S_0:\ \nu(\alpha_1)\neq0\text{ or }\nu(\alpha_2)\neq0\right\},\\
S_2&=\left\{\nu\in S_0:\ \nu(\pi_1)\neq0\text{ or }\nu(\pi_2)\neq0\right\},\\
S_3&=\left\{\nu\in S_0:\ \nu(\beta_1)\neq0\text{ or } \nu(\beta_2)\neq0\right\},\\
S_4&=\left\{\nu\in S_0:\ \nu(\rho_1)\neq0\text{ or } \nu(\rho_2)\neq0\right\}.
\end{align*}
Clearly, $|S|\le|S_1|+|S_2|+|S_3|+|S_4|+|S_{\infty}|$. Since $[F:\C(x)]< 4\deg h$ we have $|S_\infty|<4\deg h$. For the other sets, we argue as follows.

Note that the $\alpha_i$'s are integral over $\C(x)$ (they are the roots of $\mathcal{G}(T)$) and therefore $\nu(\alpha_i)\ge0$ for every finite valuation $\nu$. Note that thus $\nu(\alpha_1\alpha_2)>0$ if and only if either $\nu(\alpha_1)>0 \text{ or } \nu(\alpha_2)>0$. Also, by Vieta's formulae we have $\alpha_1\alpha_2=-A_0(x)$. Further recall that by Lemma~\ref{propert} we have $\HH(A_0(x))=\deg A_0\cdot\HH(x)$ and that $\sum_{\nu}\max(0,\nu(A_0(x)))=\HH(A_0(x))$ by the sum formula. Thus,
\begin{align*}
|S_1|&=|\left\{\nu\in S_0:\ \nu(\alpha_1)>0 \text{ or } \nu(\alpha_2)>0\right\}|\\
&=|\left\{\nu\in S_0:\ \nu(\alpha_1\alpha_2)>0\right\}|=|\{\nu\in S_0:\ \nu(A_0(x))>0\}|\\
&\textstyle\le\sum'1\le \sum\limits_{\nu}\max(0,\nu(A_0(x)))=\HH(A_0(x))\\
&=\deg A_0\cdot\HH(x)=\deg A_0\cdot [F:\C(x)]< \deg A_0\cdot4\deg h,
\end{align*}
where the sum $\sum'$ runs over all valuations $\nu$ for which $\nu(A_0(x))>0$ holds.

In order to bound $|S_3|$ we argue similarly. We have that $\beta_1,\beta_2$ are the roots of the characteristic polynomial of $(G_n(y))_{n=0}^\infty$, and are hence integral over $\C(y)$. Since $y$ is integral over $\C(x)$, we have that $\beta_1, \beta_2$ are integral  over $\C(x)$. Therefore, as in the the case of $S_1$ we conclude that $\nu(\beta_i)\ge0$ for every finite valuation $\nu$. By Vieta's formulae we have $\beta_1\beta_2=-A_0(y)$. Furthermore, since $h(x)=h(y)$ we have
\[
\deg h\cdot\HH(y)=\HH(h(y))=\HH(h(x))=\deg h\cdot\HH(x),
\]
and thus
\[
\HH(y)=\HH(x)=[F:\C(x)].
\]
Therefore,
\begin{align*}
|S_3|&=|\left\{\nu\in S_0:\ \nu(\beta_1)>0 \text{ or } \nu(\beta_2)>0\right\}|\\
&=|\{\nu\in S_0:\ \nu(\beta_1\beta_2)>0\}|=|\{\nu\in S_0:\ \nu(A_0(y))>0\}|\\
&\le\deg A_0\cdot\HH(y)=\deg A_0\cdot\HH(x)<\deg A_0\cdot4\deg h.
\end{align*}

For $|S_2|$, note that
\begin{align*}
|S_2|&\le|\{\nu\in S_0:\ \nu(\pi_1)>0\text{ or }\nu(\pi_2)>0\}|\\
&\phantom{\ \le}+|\{\nu\in S_0:\ \nu(\pi_1)<0\text{ or }\nu(\pi_2)<0\}|.
\end{align*}
Recall that  $G_0(x),G_1(x),\alpha_1,\alpha_2$ are integral over $\C(x)$, and thus also $G_1(x)-\alpha_2G_0(x)$, $G_1(x)-\alpha_1G_0(x)$ and $\alpha_1-\alpha_2$. Therefore, for any $\nu\in S_0$ we have $\nu(G_1(x)-\alpha_2G_0(x))\geq 0$, $\nu(G_1(x)-\alpha_1G_0(x))\geq 0$ and $\nu(\alpha_1-\alpha_2)\geq 0$. Thus
\begin{align*}
\nu(\pi_1)&=\nu\left(\frac{G_1(x)-\alpha_2G_0(x)}{\alpha_1-\alpha_2}\right)\\
&=\nu(G_1(x)-\alpha_2G_0(x))+\nu\left(\frac{1}{\alpha_1-\alpha_2}\right)\\
&=\underbrace{\nu(G_1(x)-\alpha_2G_0(x))}_{\ge0}-\underbrace{\nu(\alpha_1-\alpha_2)}_{\ge 0}.
\end{align*}
Hence for $\nu\in S_0$ it follows that
\begin{align*}
\nu(\pi_1)>0&\mbox{ implies }\nu(G_1(x)-\alpha_2G_0(x))>0,\\
\nu(\pi_1)<0&\mbox{ implies }\nu(\alpha_1-\alpha_2)>0.
\end{align*}
In the same manner we see that
\begin{align*}
\nu(\pi_2)>0&\mbox{ implies }\nu(G_1(x)-\alpha_1G_0(x))>0,\\
\nu(\pi_2)<0&\mbox{ implies }\nu(\alpha_1-\alpha_2)>0.
\end{align*}

Further note that since $\nu(G_1(x)-\alpha_2G_0(x))\geq 0$ and $\nu(G_1(x)-\alpha_1G_0(x))\geq 0$ for any $\nu \in S_0$ we have
that either $\nu(G_1(x)-\alpha_2G_0(x))>0$ or $\nu(G_1(x)-\alpha_1G_0(x))>0$ if and only if
\[
\nu((G_1(x)-\alpha_2G_0(x))(G_1(x)-\alpha_1G_0(x)))>0,
\]
that is
\[
\nu(G_1(x)^2-G_0(x)G_1(x)A_1(x)-A_0(x)G_0(x)^2)>0.
\]
 In a similar manner we conclude that $\nu(\alpha_1-\alpha_2)>0$ if and only if $\nu((\alpha_1-\alpha_2)^2)>0$, that is
\[
 \nu(A_1(x)^2+4A_0(x))>0.
\]
Therefore,
\begin{align*}
|S_2|\le\ &|\{\nu\in S_0:\ \nu(\pi_1)>0\text{ or }\nu(\pi_2)>0\}|\\
&+|\{\nu\in S_0:\ \nu(\pi_1)<0\text{ or }\nu(\pi_2)<0\}|\\
\leq\  & |\{\nu\in S_0:\ \nu(G_1(x)-\alpha_2G_0(x))>0\text{ or } \nu(G_1(x)-\alpha_1G_0(x))>0\}|\\
&+ |\{\nu\in S_0:\ \nu(\alpha_1-\alpha_2)>0\}|\\
=\ &|\{\nu\in S_0: \nu(G_1(x)^2-G_0(x)G_1(x)A_1(x)-A_0(x)G_0(x)^2)>0\}|\\
&+|\{\nu\in S_0:  \nu(A_1(x)^2+4A_0(x))>0\}|,\\
 \intertext{and then arguing similarly as for $S_1$ we get}
|S_2|\le& \HH((G_1^2-G_0G_1A_1-G_0^2A_0)(x))+\HH((A_1^2+4A_0)(x))\\
\le&(C_2+C_1) \HH(x)<(C_1+C_2)4\deg h ,
\end{align*}
where
\[C_2:=\max\{2\deg G_1,\deg G_0+\deg G_1+\deg A_1,2\deg G_0+\deg A_0\}.\]
We argue similarly for $|S_4|$:
\begin{align*}
|S_4|&\le|\{\nu\in S_0:\ \nu(\rho_1)>0\text{ or }\nu(\rho_2)>0\}|+|\{\nu\in S_0:\ \nu(\rho_1)<0\text{ or }\nu(\rho_2)<0\}|\\
& \leq |\{\nu\in S_0: \nu(G_1(y)^2-G_0(y)G_1(y)A_1(y)-A_0(y)G_0(y)^2)>0\}|\\
&\phantom{\ge}+|\{\nu\in S_0:  \nu(A_1(y)^2+4A_0(y))>0\}|\\
&\le \HH((G_1^2-G_0G_1A_1-G_0^2A_0)(y))+\HH((A_1^2+4A_0)(y))\\
&\le(C_1+C_2) \HH(y)=(C_1+C_2) \HH(x)<(C_1+C_2)4\deg h .
\end{align*}

This gives
\begin{align*}
|S|&\le|S_1|+|S_2|+|S_3|+|S_4|+|S_{\infty}|\\
&< 8\deg A_0\deg h+8\deg h(C_1+C_2)+4\deg h\\
&=(8(\deg A_0+C_1+C_2)+4)\deg h.
\end{align*}
Finally we get
\begin{align*}
\HH(u_2)&\le 2\g-2+|S|\notag\\
&< 8C_1\deg h^2+(8(\deg A_0+C_1+C_2)+4)\deg h - 2\notag\\
&< \left(8C_1+8(\deg A_0+C_1+C_2)+4\right)\deg h^2\notag\\
&=4(2\deg A_0+4C_1+2C_2+1)\deg h^2.\label{u}
\end{align*}
We continue to estimate the terms in ~\eqref{equ3}. To give an upper bound on $\HH(\alpha_i)$, note that for $\HH(\Delta(x))=\HH(\sqrt{\Delta(x)}^2)=2\HH(\sqrt{\Delta(x)})$ it follows that
\[
\HH(\sqrt{A_1(x)^2+4A_0(x)})=\frac{1}{2}\HH(A_1(x)^2+4A_0(x)).
\]
Therefore,  by \eqref{alpha} we get
\begin{align*}
\HH(\alpha_i)&\le \HH(A_1(x))+\HH(\sqrt{A_1(x)^2+4A_0(x)})\le \frac{3}{2}C_1\HH(x), \quad i=1, 2.
\end{align*}
Using $\HH(x)=\HH(y)$, we obtain the same upper bound for $\HH(\beta_1)$ and $\HH(\beta_2)$:
\[
 \HH(\beta_i) \le \frac{3}{2}C_1\HH(x), \quad i=1, 2.
\]
Furthermore, we have
\begin{align*}
\HH(\pi_1)+\HH(\pi_2)&=\HH\left(\frac{G_1(x)-\alpha_2 G_0(x)}{\alpha_1-\alpha_2}\right)+\HH\left(-\frac{G_1(x)-\alpha_1 G_0(x)}{\alpha_1-\alpha_2}\right)\\
&\le\HH(G_1(x))+\HH(\alpha_2)+\HH(G_0(x))+\HH(\alpha_1)+\HH(\alpha_2)+\\
&\phantom{\ge}+\HH(G_1(x))+\HH(\alpha_1)+\HH(G_0(x))+\HH(\alpha_1)+\HH(\alpha_2)\\
&=2(\HH(G_0(x))+\HH(G_1(x)))+3(\HH(\alpha_1)+\HH(\alpha_2))\\
&\leq (2(\deg G_0+\deg G_1)+9C_1)\HH(x).
\end{align*}
It therefore follows that
\begin{equation}\label{aapp}
\HH(\alpha_1)+\HH(\alpha_2)+\HH(\pi_1)+\HH(\pi_2)\le(2(\deg G_0+\deg G_1)+12C_1)\HH(x).
\end{equation}
Next, we estimate the height of $w_2$ in a similar way:
\begin{align}
\HH(w_2)=\HH\left(\frac{\rho_1}{\rho_2}\right)&=\HH\left(-\frac{G_1(y)-\beta_2 G_0(y)}{G_1(y)-\beta_1 G_0(y)}\right)\notag \\
&\le2(\HH(G_1(y))+\HH( G_0(y)))+\HH(\beta_1)+\HH(\beta_2)\notag \\
&\le 2(\deg G_1+\deg G_0)\HH(y)+3C_1\HH(y)\notag \\
&<(2(\deg G_0+\deg G_1)+3C_1)4\deg h\notag \\
&<(2(\deg G_0+\deg G_1)+3C_1)4\deg h^2\notag.
\end{align}
Thus
\begin{align}\label{rr}
H(u_2)+\HH(w_2)< 4\deg h^2 \left(2(\deg A_0+\deg G_0+\deg G_1)+7C_1+2C_2+1\right).
\end{align}
We now find a lower bound for $\HH(v_2)$ in terms of $\HH(x)$:
\begin{align*}
\HH(v_2)&=\HH\left(\frac{\beta_1}{\beta_2}\right)=\HH\left(\frac{A_1(y)-\sqrt{A_1(y)^2+4A_0(y)}}{A_1(y)+\sqrt{A_1(y)^2+4A_0(y)}}\right)\\
&=\HH\left(1-2\cdot\frac{\sqrt{A_1(y)^2+4A_0(y)}}{A_1(y)+\sqrt{A_1(y)^2+4A_0(y)}}\right)\\
&=\HH\left(\frac{\sqrt{A_1(y)^2+4A_0(y)}}{A_1(y)+\sqrt{A_1(y)^2+4A_0(y)}}\right)=\HH\left(\frac{A_1(y)}{\sqrt{A_1(y)^2+4A_0(y)}}+1\right)\\
&=\HH\left(\sqrt{\frac{A_1(y)^2}{A_1(y)^2+4A_0(y)}}\right)=\frac{1}{2}\HH\left(\frac{A_1(y)^2+4A_0(y)}{A_1(y)^2}\right)=\frac{1}{2}\HH\left(\frac{A_0(y)}{A_1(y)^2}\right).
\end{align*}
Note that
\begin{align*}
\HH\left(\frac{A_0(y)}{A_1(y)^2}\right)&\ge |\HH(A_0(y))-\HH(A_1(y)^2)|= |\deg A_0-2 \deg A_1|\cdot\HH(y).
\end{align*}
If $\deg A_0\neq2 \deg A_1$, then clearly
\[
\HH\left(\frac{A_0(y)}{A_1(y)^2}\right)\geq \HH(y).
\]
If on the other hand we have that $\deg A_0=2\deg A_1$, then by the polynomial remainder theorem we have that $A_0(y)=A_1(y)^2q(y)+r(y)$, where $q\in\C$ is constant and $\deg r<2\cdot\deg A_1$. Thus,
\begin{align*}
\HH\left(\frac{A_0(y)}{A_1(y)^2}\right)&=\HH\left(q(y)+\frac{r(y)}{A_1(y)^2}\right)=\HH\left(\frac{r(y)}{A_1(y)^2}\right)\\
&\ge\HH(A_1(y)^2)-\HH(r(y))=(2\deg A_1-\deg r)\cdot\HH(y)\ge \HH(y).
\end{align*}
Thus,
\begin{equation}\label{bb}
\HH(v_2))\ge\frac{1}{2}\HH(y)=\frac{1}{2}\HH(x).
\end{equation}
(Note that since $\HH(v_2)\neq0$,  we cannot have $\deg A_0=\deg A_1=0$).

Considering again \eqref{equ3}, by \eqref{aapp}, \eqref{rr} and \eqref{bb} we find that
\begin{align*}
\HH(G_n(x))< C\deg h^2,
\end{align*}
where $C=16\left(2(\deg A_0+\deg G_0+\deg G_1)+7C_1+2C_2+1\right)\left(\deg G_0+\right.$ $\left.\deg G_1+6C_1\right).$

To give a suitable lower bound for $\HH(G_n(x))$, note that since $G_n=g\circ h$ we have
\[
\HH(G_n(x))=\deg g  \deg h \cdot [F:\C(x)]=\deg g  \deg h \cdot[F:\C(x,y)]\cdot[\C(x,y):\C(x)].
\]
By Lemma \ref{extdeg} it follows that $[\C(x,y):\C(x)]\ge\frac{1}{2}\deg h$. Therefore, we have
\begin{align*}
\HH(G_n(x))\geq \frac{1}{2}\deg g\deg h^2\cdot [F:\C(x,y)]\geq \frac{1}{2}\deg g\deg h^2.
\end{align*}
Finally, we conclude that
\begin{align*}
\frac{1}{2}\deg g\deg h^2\le \HH(G_n(x))< C\deg h^2,
\end{align*}
and therefore that $\deg g< 2C$.
\end{proof}

\section{Acknowledgement} The work on this manuscript was supported by FWF (Austrian Science Fund) Grant No. P24574.

\vspace{1cm}

\end{document}